\def\sA{\langle A\rangle}
\def\sB{\langle B\rangle}
\def\sC{\langle C\rangle}
\def\rA{k[A]}
\def\rB{k[B]}
\def\rC{k[C]}
\def\C{\mathfrak{C}}
\def\N{\mathbb{N}}
\def\Z{\mathbb{Z}}
\def\A{\mathbb{A}}
\newcommand{\reg}{\mathrm {reg }}
\newcommand{\pd}{\mathrm {pd }}
\def\rar{\rightarrow}
\def\lar{\longrightarrow}
\def\coker{\operatorname{coker}}
\newtheorem{theorem}{Theorem}[section]
\newtheorem{lemma}[theorem]{Lemma}
\newtheorem{corollary}[theorem]{Corollary}
\newtheorem{proposition}[theorem]{Proposition}
\theoremstyle{definition}
\newtheorem{remark}{Remark}
\newtheorem{example}{Example}
\title[Minimal free resolution of semigroup rings obtained by gluing]{The structure of the minimal free resolution of semigroup rings obtained by gluing}
\author{Philippe Gimenez}
\address{Instituto de Matem\'aticas de la Universidad de Valladolid (IMUVA),
Facultad de Ciencias,
47011 Valladolid, Spain.}
\email{pgimenez@agt.uva.es}
\thanks{The first author was partially supported by {\it Ministerio de Ciencia e Innovaci\'on} (Spain), MTM2016-78881-P}
\author{Hema Srinivasan}
\address{Mathematics Department, University of Missouri, Columbia, MO 65211, USA.}
\email{SrinivasanH@math.missouri.edu}
\thanks{{\it Keywords}: semigroup rings, numerical semigroups, gluing, free resolutions, Betti numbers.}
\thanks{{\it 2000 MSC:} 13D02, 20M25, 13A02, 20M14.}
\begin{document}
\maketitle

\begin{abstract} 
We construct a minimal free resolution of the semigroup ring $\rC$ in terms of minimal resolutions of $\rA$ and $\rB$
when $\sC$ is a numerical semigroup obtained by gluing two numerical semigroups $\sA$ and $\sB$.
Using our explicit construction, we compute the Betti numbers, graded Betti numbers, regularity and Hilbert series of $\rC$, and
prove that the minimal free resolution of $\rC$ has a differential graded algebra structure provided the resolutions of $\rA$ and $\rB$ possess them.
We  discuss the consequences of our results in small embedding dimensions.  Finally, we give an extension of our main result to semigroups in $\N^n$.
\end{abstract}

\section{Introduction}
\label{introSec}

Given a subset of positive integers $C= \{c_1, \ldots, c_n\}$, we denote by $\sC$ the semigroup generated by $C$.
When $\gcd(c_1,\ldots,c_n)=1$, i.e., when $\N\setminus\sC$ is a finite set, we call $\sC$ a {\it numerical} semigroup.
We say $C$ minimally generates the semigroup $\sC$ if no proper subset of $C$ generates the same semigroup $\sC$.
The minimal generating set of a semigroup is unique.

\medskip
Let $k$ be an arbitrary field.
Consider the algebra homomorphism $\phi _C: k[x_1, \ldots,x_n]\to k[t]$ induced by $\phi_C(x_i) = t^{c_i}$.
The semigroup ring $\rC =k[t^{c_i}|1\le i\le n]$ is the image of $\phi_C $ and it is isomorphic to $k[x_1, \ldots, x_n]/I_C$ where $I_C = \ker \phi_C$ is the prime ideal defining the affine monomial curve  $\C_C\subset\A_k^n$ whose coordinate ring is $\rC$.
The ideal $I_C$ is binomial and it is homogeneous if one gives to each variable $x_i$ weight $c_i$. Hence, one can consider graded free resolutions
of $\rC$ over the polynomial ring graded this way.
If $\sC$ is minimally generated by $C$, we say that the {\it embedding dimension} of $\sC $ is $n$.
The ring $\rC$ has Krull dimension $1$ and hence it is always Cohen-Macaulay.
Observe that if one studies the semigroup ring $\rC$, one can always assume without loss of generality that $\gcd(c_1,\ldots,c_n)=1$ since
one gets the same ideal $I_C$ when one divides all the integers in $C$ by their gcd.
When the ideal $I_C$ is generated by a regular sequence, we say the numerical semigroup
$\sC$, the semigroup ring $\rC$, and the monomial curve $\C_C$, are complete intersections.  Similarly, we say the semigroup $\sC$ and
the monomial curve $\C_C$ are
Gorenstein when the semigroup ring $\rC$ satisfies this property. Note that the fact that $\rC$ is Gorenstein or a complete intersection
does not depend on the field $k$ and that's why we can talk about Gorenstein and complete intersection semigroups.

\medskip
Numerical semigroups of embedding dimension 1 are isomorphic to the integers and hence their semigroup ring is simply the polynomial ring $k[x]$.
When the embedding dimension is $2$, the semigroup ring $\rC\simeq k[x,y]/(x^a-y^b)$ is the coordinate ring of a hypersurface.
When the embedding dimension is three,
$\rC\simeq k[x,y,z]/I_C$ is either a complete intersection or $I_C$ is the ideal of the $2\times 2$ minors of a $2\times 3$ matrix of the form
$\displaystyle{
\left(\begin{array}{ccc}
x^a&y^b&z^c\\ y^d&z^e&x^f
\end{array}\right)
}$ by \cite[Theorems 3.7 and 3.8]{He}.
When the embedding dimension is four or more, there is no such simple classification for the semigroup rings.
In fact, when $C = \{c_1,c_2,c_3,c_4\}$, there is no upper bound for  the minimal number of generators for the ideal $I_C$, \cite {Br}.

\medskip
In this paper, we construct a minimal graded free resolution for the semigroup ring $\rC$ when $\sC$ is a numerical semigroup
obtained by gluing two smaller numerical semigroups.
This is our main Theorem \ref{main}.

\medskip
Our paper is organized as follows.
In section \ref{gluingSec}, we recall the definition of semigroups obtained by gluing and
give some of their properties, known and new.
Section \ref{mainSec} has our main result and the consequences.
For numerical semigroups obtained by gluing, we compute the invariants associated to the minimal graded free resolution like the Betti numbers (Corollary \ref{BettiNumbers}), the graded Betti numbers (Proposition \ref{gradedBettiNumbers}), the Castelnuovo-Mumford regularity (Corollary \ref{regularity}) and the Hilbert series (Corollary \ref{Hilbert series}).
In particular, we show that if $\sC$ is obtained by gluing $\sA$ and $\sB$,
then the Cohen-Macaulay type of $\rC$ is the product of the Cohen-Macaulay types of $\rA$ and $\rB$ (Corollary \ref{CMtype}). We also show that the minimal resolution of $\rC$ has a differential graded algebra structure provided the minimal resolutions of $\rA$ and $\rB$ do (Proposition \ref{DGstructure}).
The multiplication on the resolution of $\rC$ is explicitly written from those of $\rA$ and $\rB$.
In section \ref{simplesplitSec}, we consider the case when the numerical semigroup $\sC$ decomposes as $C = k_1A\sqcup k_2B$ where one of the subset of $C$ is a singleton.
In section \ref{smallembdimSec}, we discuss the consequences of our results for small embedding dimensions and give some examples.
In particular, we classify decomposable numerical semigroups up to embedding dimension 5 (Propositions \ref{embdim4} and \ref{embdim5})
and also show that, in embedding dimension $n\geq 4$, there exist indecomposable numerical semigroups of any Cohen-Macaulay type
between 1 and $n$ (Corollary \ref{indecompCMtype}).
Finally, in the last section we observe that our main result is valid for decomposable semigroups in higher dimension (Theorem \ref{mainHD})
and give an example to illustrate this.

\section{Numerical semigroups obtained by gluing}
\label{gluingSec}

\subsection{Definition}\label{defGluingSec}

A numerical semigroup $\sC$ is obtained by {\it gluing} two semigroups $\sA$ and $\sB$ if its minimal generating set $C$ can be written as
the disjoint union of two subsets, $C=k_1A\sqcup k_2 B$,
where $A$ and $B$
are minimal generators of numerical semigroups $\sA$ and $\sB$, and
$k_1$, $k_2$ are relatively prime positive integers such that $k_1$ is in the numerical semigroup $\sB$ but not in $B$ and $k_2\in\sA\setminus A$.
When this occurs, we also say that $\sC$ is {\it decomposable},
or that $\sC$ is a {\it gluing of $\sA$ and $\sB$}.

\medskip
This way to decompose the minimal generating set of a numerical semigroup
already appears in the classical paper by Delorme \cite{De} where it is used to characterize complete intersection numerical semigroups. In \cite{rosales}, Rosales introduces the concept of gluing for finitely generated subsemigroups of $\N^n$ in terms
of a presentation for the semigroup, and he shows that, for numerical semigroups, his definition coincides with the decomposition in \cite{De};
see section \ref{propertiesGluingSec} below.

\medskip
Note that one can extend the above definition when $k_1$ and $k_2$ are not relatively prime.
If the positive integers in $C$ are not relatively prime and $d$ is their gcd, then the semigroup $\sC$ is {\it decomposable} if and only if the numerical semigroup $\langle \frac{C}{d}\rangle$ is decomposable.  In other words, if $C=k_1A\sqcup k_2B$ and
$k_i= dk_i^\ast$ for $i= 1, 2$ with $d=\gcd(k_1,k_2)$, we ask
that $k_1^\ast\in \sB\setminus B$ and $k_2^\ast\in \sA\setminus A$ to say that $\sC$ is a gluing of  $\sA$ and $\sB$.

\subsection{Notations}\label{notationsGluingSec}

When $C=k_1A\cup k_2 B$ with $k_1\in\sB$ and $k_2\in\sA$, in particular
if $\sC$ is obtained by gluing $\sA$ and $\sB$, we will use the following notations:
\begin{itemize}
\item
$p=\# A$, $q=\# B$, and
the elements in $A$ and $B$ are $\{a_1, \ldots, a_p\}$ and $\{ b_1, \dots, b_q\}$.
\item
We consider the three polynomial rings $R_A=k[x_1, \ldots, x_p]$, $R_B=k[y_1, \dots, y_q]$ and
$R= k[x_1, \ldots, x_p, y_1,\ldots, y_q]$.
Then, the semigroup ring $\rA$ is isomorphic to $ R_A/I_A$ where $I_A$ is the kernel of the algebra homomorphism
$\phi _A: R_A\to k[t]$ induced by $\phi_A(x_i) = t^{a_i}$.
Similarly, $\rB\simeq R_B/I_B$ and $\rC\simeq R/I_C$.
\item
$F_A$ and $F_B$ are minimal graded free resolutions for $\rA$ and $\rB$ as modules over $R_A$ and $R_B$ respectively,
where variables $x_i$ and $y_j$ in the polynomial rings $R_A$ and $R_B$ are given weight $a_i$ and $b_j$ to make the modules
$\rA$ and $\rB$ graded.
When needed, the differentials will be denoted by
$\partial_i^{A}:(F_A)_i\rar (F_A)_{i-1}$ and $\partial_j^{B}:(F_B)_j\rar (F_B)_{j-1}$ for $1\leq i\leq p-1$ and $1\leq j\leq q-1$.
By definition, the resolutions $F_A$ and $F_B$ are {\it minimal} if and only if
$\partial_i^{A}({(F_A)}_i)\subset (x_1, \ldots, x_p){(F_A)}_{i-1}$ and
$\partial_j^{B}({(F_B)}_j)\subset  (y_1, \ldots, y_q){(F_B)}_{j-1}$ for all $1\leq i\leq p-1$ and $1\leq j\leq q-1$.
\item
Since $k_1\in\sB$ and $k_2\in\sA$, there exist non negative integers $\alpha_i , \beta_i$ such that
$k_1 = \sum_{j=1}^{q} \beta_j b_j$ and $k_2 = \sum_{i=1}^{p} \alpha_i a_i$, and we consider
the binomial $\rho = \prod_{i=1}^{p} x_i^{\alpha_i}- \prod_{j=1}^{q} y_j^{\beta_j} \in R$. It is homogeneous of degree $k_1k_2$
if one gives to each variable in $R$ the corresponding weight in $C=\{k_1a_1,\ldots,k_1a_p,k_2b_1,\ldots,k_2b_q\}$.
\end{itemize}

\subsection{Properties}\label{propertiesGluingSec}

The following lemma helps to understand why, in the definition of gluing, we consider $k_1\notin B$ and $k_2\notin A$.
It is slightly more precise than \cite[Prop. 10 (ii)]{De}.

\begin{lemma}
If $A$ and $B$ minimally generate $\sA$ and $\sB$ respectively, and $k_1\in \sB$ and $k_2\in \sA$ are two relatively prime positive integers,
then $C= k_1A\cup k_2B$ is a disjoint union and minimally generates the semigroup $\sC$ if and only if
$k_1\notin B$ and $k_2\notin A$.
\end{lemma}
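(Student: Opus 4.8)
The plan is to prove the two implications separately, noting first that each of $k_1A=\{k_1a_1,\dots,k_1a_p\}$ and $k_2B=\{k_2b_1,\dots,k_2b_q\}$ already consists of distinct integers, since multiplication by a positive integer is injective; thus the only way "$C$ is a disjoint union" can fail is through an overlap $k_1a_i=k_2b_j$. The recurring tool throughout will be that, because $\gcd(k_1,k_2)=1$, any relation $k_1u=k_2v$ between integers forces $k_2\mid u$ and $k_1\mid v$. For the implication ``disjoint $+$ minimal $\Rightarrow$ $k_1\notin B$ and $k_2\notin A$'' I would argue by contraposition. If $k_2=a_\ell\in A$, then writing $k_1=\sum_j\beta_jb_j$ with $\beta_j\geq 0$ (possible since $k_1\in\sB$) gives $k_1a_\ell=k_1k_2=\sum_j\beta_j(k_2b_j)$, expressing the generator $k_1a_\ell\in k_1A$ as a nonnegative combination of the generators $k_2b_j\in k_2B$. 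If $C$ is a disjoint union then $k_1a_\ell\neq k_2b_j$ for all $j$, so this combination avoids $k_1a_\ell$ itself, contradicting minimality. The case $k_1=b_m\in B$ is symmetric, using $k_2=\sum_i\alpha_ia_i$ to write $k_2b_m=\sum_i\alpha_i(k_1a_i)$.

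For the converse, assume $k_1\notin B$ and $k_2\notin A$, and first establish disjointness. If $k_1a_i=k_2b_j$, then $\gcd(k_1,k_2)=1$ yields $a_i=k_2c$ and $b_j=k_1c$ for some positive integer $c$. When $c=1$ this says $a_i=k_2\in A$, contradicting the hypothesis; when $c\geq 2$ one has $a_i=c\,k_2>k_2$, so \emph{any} expression of $k_2$ as a sum of elements of $A$ must avoid $a_i$ (a single copy of $a_i$ already exceeds $k_2$), whence $a_i=c\,k_2\in\langle A\setminus\{a_i\}\rangle$ contradicts the minimality of $A$. This magnitude argument---that a generator cannot equal a multiple $\geq 2$ of a strictly smaller element of its own semigroup without becoming redundant---is the technical heart of the proof.

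To finish the converse I would show $C$ minimally generates $\sC$ by assuming some generator is redundant. If it is $k_1a_\ell$, write the redundancy as $k_1a_\ell=k_1\sigma+k_2\tau$, where $\sigma\in\langle A\setminus\{a_\ell\}\rangle\cup\{0\}$ and $\tau\in\sB$. Then $k_1(a_\ell-\sigma)=k_2\tau$; if $\tau=0$ the relation already exhibits $a_\ell\in\langle A\setminus\{a_\ell\}\rangle$, contradicting the minimality of $A$, and otherwise $\gcd(k_1,k_2)=1$ gives $a_\ell-\sigma=k_2m'$ and $\tau=k_1m'$ for some integer $m'\geq 1$. Exactly as in the disjointness step, $a_\ell=\sigma+m'k_2>k_2$ (using $k_2\notin A$), so both $k_2$ and $\sigma$ lie in $\langle A\setminus\{a_\ell\}\rangle$, forcing $a_\ell\in\langle A\setminus\{a_\ell\}\rangle$, a contradiction. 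The case of a redundant $k_2b_m$ is symmetric and invokes $k_1\notin B$ together with the minimality of $B$.

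I expect the main obstacle to be precisely the cases $c\geq 2$ and $m'\geq 1$, where the overlap or redundancy arises from a genuine multiple rather than a bare equality. Handling these cleanly requires combining the divisibility consequence of $\gcd(k_1,k_2)=1$ with the size comparison and the minimality of $A$ (resp.\ $B$), and one must check carefully that the combinations produced genuinely avoid the generator being eliminated, so that minimality is truly violated.
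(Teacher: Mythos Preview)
Your proof is correct. Both directions go through as written; the contraposition for necessity and the magnitude arguments for sufficiency are sound.

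The approach in the reverse direction is genuinely different from the paper's. The paper treats non-disjointness and non-minimality in one stroke (an overlap being a degenerate redundancy), assumes $k_2b_q=\sum_i c_i(k_1a_i)+\sum_{j<q}d_j(k_2b_j)$, extracts $\sum_i c_ia_i=k_2t$ via $\gcd(k_1,k_2)=1$, divides by $k_2$, and then \emph{substitutes the expansion $k_1=\sum_j r_jb_j$} to obtain a linear relation $\sum_{j<q}(tr_j+d_j)b_j+(tr_q-1)b_q=0$ among the generators of $\sB$, finishing by positivity. Your argument instead exploits the total order on $\N$: once $a_\ell-\sigma=k_2m'$ with $m'\geq 1$, you observe $a_\ell>k_2$ (using $k_2\notin A$ to exclude equality), so any representation of $k_2$ in $\sA$ must avoid $a_\ell$, forcing $k_2\in\langle A\setminus\{a_\ell\}\rangle$ and hence $a_\ell\in\langle A\setminus\{a_\ell\}\rangle$. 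This size-comparison idea is arguably more transparent and avoids the somewhat delicate case analysis on the sign of $tr_q-1$; the paper's substitution method, on the other hand, unifies the overlap and redundancy cases into a single computation and is closer in spirit to how one manipulates relations in the presentation ideal.
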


\begin{proof}
First observe that the assumption in the lemma is necessary: if $k_1\in B$
or $k_2\in A$ then $k_1k_2\in C$ and it is not a minimal generator of $\sC$ unless $k_1\in B$
and $k_2\in A$, but in this case $k_1k_2\in k_1A$ and $k_1k_2\in k_2B$ and hence the union $k_1A\cup k_2B$ is not disjoint.

\medskip
Now assume that $k_1\notin B$ and $k_2\notin A$.
If $C$ is not a minimal generating set of $\sC$ or if the union $C= k_1A\cup k_2B$ is not disjoint,
we may say without loss of generality that $k_2b_q $ is in the numerical semigroup generated by the rest of the elements in $C$.
That is, there are non negative integers $c_i$ for $1\leq i \leq p$ and $d_j$ for $1\leq j\leq q-1$ such that
$$
k_2b_q = \sum_{i=1}^{p} c_ik_1a_i + \sum_{j=1}^{q-1}d_jk_2 b_j\,.
$$
Hence, $k_2$ divides $k_1 \sum_{i=1}^{p} c_ia_i$ and, since $(k_1,k_2) = 1$, one has that $\sum_{i=1}^{p} c_ia_i = k_2t$ for some
non negative integer $t$.  Moreover, $t$ must be positive or else $B$ will not be a minimal generating set for $\sB$.

\medskip
Dividing the above equation by $k_2$, we get
$$
b_q = k_1t+ \sum_{j=1}^{q-1} d_jb_j\,.
$$

Now, note that $k_1 = \sum_{j=1}^{q} r_j b_j$ for some non negative integers $r_j$.
Thus, we get
$$
\sum_{j=1}^{q-1} (tr_j+d_j) b_j + (tr_q-1) b_q = 0\,.
$$
Since $tr_j+d_j \ge 0$ and $tr_q\ge 0$, this can only occur if either $d_j=r_j=0$ for all $j=1,\ldots, q-1$ and $t=r_q=1$, or $tr_q = 0$.
In the first case, one has that $b_q=k_1t=k_1$ which is impossible if $k_1\notin B$.
On the other hand, $tr_q = 0$ implies $r_q = 0$.
But then,
$b_q = \sum _{j=1}^{q-1} (tr_j+d_j)b_j$ which contradicts the fact that  $B$ is a minimal generating set for $\sB$ .
Hence, $C=k_1A\sqcup k_2B$ minimally generates the semigroup $\sC$ and we are done.
\end{proof}

For a numerical semigroup, the definition that we gave in section \ref{defGluingSec} is equivalent to Rosales' original definition of gluing and this is proved in \cite[Lemma 2.2]{rosales}.
We add our proof of the following result for completion and convenience:

\begin{proposition}[{\cite[Lemma 2.2]{rosales}}]\label{generatorsGluing}
If $\sC$ is obtained by gluing $\sA$ and $\sB$, then the ideal $I_C$ is minimally generated
by the minimal generators of $I_A$ and $I_B$ and $\rho$, the binomial defined in
section \ref{notationsGluingSec}.
\end{proposition}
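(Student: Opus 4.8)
The plan is to set $J=I_A+I_B+(\rho)$ and to prove both $J=I_C$ and the minimality of the displayed generating set. The inclusion $J\subseteq I_C$ is immediate: the composite $R_A\hookrightarrow R\xrightarrow{\phi_C}k[t]$ sends $x_i\mapsto t^{k_1a_i}$, so it kills $I_A$, whence $I_A\subseteq I_C$; symmetrically $I_B\subseteq I_C$; and $\phi_C(\rho)=t^{k_1k_2}-t^{k_2k_1}=0$ gives $\rho\in I_C$. Since $I_C$ is a binomial ideal, for the reverse inclusion it suffices to show that every binomial $x^uy^v-x^{u'}y^{v'}\in I_C$ lies in $J$.

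For the reverse inclusion, the equality $\phi_C(x^uy^v)=\phi_C(x^{u'}y^{v'})$ gives $k_1(\sum_i u_ia_i-\sum_i u'_ia_i)=k_2(\sum_j v'_jb_j-\sum_j v_jb_j)$, and since $\gcd(k_1,k_2)=1$ there is an integer $m$ with $\sum_i u_ia_i-\sum_i u'_ia_i=k_2m$ and $\sum_j v'_jb_j-\sum_j v_jb_j=k_1m$. If $m=0$ the $x$- and $y$-parts balance separately, so $x^u-x^{u'}\in I_A$ and $y^v-y^{v'}\in I_B$ and $x^uy^v-x^{u'}y^{v'}=y^v(x^u-x^{u'})+x^{u'}(y^v-y^{v'})\in I_A+I_B$. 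For $m>0$ (the case $m<0$ being symmetric) I would use $k_2m=\sum_i(m\alpha_i)a_i$ and $k_1m=\sum_j(m\beta_j)b_j$ to get $x^u-x^{u'+m\alpha}\in I_A$ and $y^{v'}-y^{v+m\beta}\in I_B$, together with the fact that $\rho=x^\alpha-y^\beta$ divides $x^{m\alpha}-y^{m\beta}$, so that $x^{u'}y^v(x^{m\alpha}-y^{m\beta})\in(\rho)$. Telescoping,
\[ x^uy^v-x^{u'}y^{v'}=y^v(x^u-x^{u'+m\alpha})+x^{u'}y^v(x^{m\alpha}-y^{m\beta})+x^{u'}(y^{v+m\beta}-y^{v'}) \]
writes the binomial as a sum of elements of $I_A$, $(\rho)$ and $I_B$. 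This telescoping, and the bookkeeping of the exponent vectors, is the main computational obstacle.

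For minimality I would set $S=R/(I_A+I_B)$. Since $R=R_A\otimes_kR_B$, one has $S=\rA\otimes_k\rB$, which embeds in $k[t]\otimes_kk[t']=k[t,t']$ and is therefore a domain, and whose minimal $R$-free resolution is $F_A\otimes_kF_B$; in particular $\dim_k\mathrm{Tor}_1^R(S,k)=\beta_1(\rA)+\beta_1(\rB)=\mu(I_A)+\mu(I_B)$. By the generation step $\rC=S/(\bar\rho)$, where $\bar\rho\in\mathfrak{m}_S$ is the image of $\rho$; it is nonzero (its two terms are distinct monomials of $k[t,t']$) and hence a nonzerodivisor, so $(\bar\rho)\cong S$ as $S$-modules.

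The long exact $\mathrm{Tor}^R(-,k)$ sequence of $0\to(\bar\rho)\to S\to\rC\to0$ then reads
\[ \mathrm{Tor}_1^R((\bar\rho),k)\xrightarrow{\psi}\mathrm{Tor}_1^R(S,k)\to\mathrm{Tor}_1^R(\rC,k)\to(\bar\rho)\otimes_Rk\xrightarrow{0}S\otimes_Rk, \]
where the last map vanishes because $\bar\rho\in\mathfrak{m}_S$. Under the isomorphism $(\bar\rho)\cong S$ the map $\psi$ is induced by multiplication by $\bar\rho\in\mathfrak{m}_S$, hence is zero on the $k$-vector space $\mathrm{Tor}_1^R(S,k)$. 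Therefore $\beta_1(\rC)=\dim_k\mathrm{Tor}_1^R(\rC,k)=\mu(I_A)+\mu(I_B)+1$, which is exactly the cardinality of our generating set, and a generating set whose size equals the minimal number of generators is automatically minimal. The main subtlety is the vanishing of $\psi$: it rests on $\rho$ lying in $\mathfrak{m}$ and on $S$ being a domain, and it is precisely what prevents the minimal generators of $I_A$ and $I_B$ from becoming redundant once $\rho$ is adjoined.
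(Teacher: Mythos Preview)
Your argument is correct. For the inclusion $I_C\subseteq J$ you and the paper run the same idea: use $\gcd(k_1,k_2)=1$ to extract an integer (your $m$, the paper's $r$) and split any relation into an $A$-part, a $B$-part, and a multiple of the $\rho$-relation. The paper does this at the level of integer relations on the generators of $\sC$ and then asserts the passage to the ideal; your telescoping identity performs that passage explicitly in $R$, so your version is the more self-contained of the two.

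Where you genuinely diverge is on minimality. The paper simply declares it ``clear''. Your Tor argument---resting on $S=\rA\otimes_k\rB$ being a domain, so that $\bar\rho$ is a nonzerodivisor and $0\to S\xrightarrow{\bar\rho}S\to\rC\to0$ is exact---actually proves the count $\beta_1(\rC)=\mu(I_A)+\mu(I_B)+1$. This anticipates the structure established later in Theorem~\ref{main}: that $F_A\otimes F_B$ minimally resolves $S$ and that the mapping cone of multiplication by $\rho$ is exact. The gain is rigor; the cost is that you invoke the tensor-product resolution before the paper formally proves it, though that fact (for ideals in disjoint variable sets) is standard and logically independent of Proposition~\ref{generatorsGluing}.
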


\begin{proof}
First note that $\sum_{i=1}^p c_i(k_1a_i)+\sum_{j=1}^q d_j(k_2b_j )= 0$ for some integers $c_i, d_j$
is a minimal relation on the generators of the semigroup $\sC$
if and only if
$\prod _{c_i>0}x_i^{c_i}\prod_{d_j>0}y_j^{d_j} -\prod _{c_i<0}x_i^{-c_i}\prod_{d_j<0}y_j^{-d_j}$ is a minimal generator
for the ideal $I_C$.

\medskip
With notations in section \ref{notationsGluingSec},
since
\begin{equation}\label{rhoEq}
\sum_{i=1}^{p} \alpha_i (k_1 a_i) - \sum_{j=1}^{q} \beta_j (k_2 b_j)=0
\end{equation}
for non negative integers $\alpha_i$ and $\beta_j$, one has that $\rho\in I_C\setminus (I_A+I_B)$.

\medskip
Let $\sigma =\sum_{i=1}^{p} c_i (k_1a_i)+\sum_{j=1}^{q} d_j(k_2b_j) = 0$ be a relation on the elements in $C$ for some integers $c_i$ and $d_j$.  Then,
$k_1 ( \sum_{i=1}^{p} c_ia_i) = -k_2( \sum_{j=1}^{q} d_jb_j)$. Since $k_1$ and $k_2$ are relatively prime, we conclude that
$\sum_{i=1}^{p} c_i a_i = k_2 r= r\sum_{i=1}^{p}\alpha_i a_i$ and $\sum_{j=1}^{q} d_jb_j =- k_1r= -r \sum_{j=1}^{q} \beta_j b_j$.
So, $\sum_{i=1}^{p}(c_i-r\alpha_i) a_i = 0 = \sum_{j=1}^{q} (d_j+r\beta_j) b_j $.  Hence,
$\sigma = \sum_{i=1}^{p}(c_i-r\alpha_i) k_1 a_i +  \sum_{j=1}^{q}(d_j+r\beta_j) k_2 b_j $.
Thus, any relation $\sigma$  among the integers in $C$ is a sum of relations among elements in $A$ and relations among elements $B$,
unless $c_i =r\alpha_i$ and $d_j = -rb_j$ for all $i,j$.
In that case, $\sigma$ is a multiple of the relation (\ref{rhoEq}) and hence $I_C\subseteq I_AR+I_BR+\langle\rho\rangle$. The other inclusion is trivial,
and it is clear that taking minimal generators of $I_A$ and $I_B$ and adding $\rho$, one gets a sistem of generators of $I_C$ which is minimal.
\end{proof}

\section{Main result and consequences}
\label{mainSec}

With the notations in section \ref{notationsGluingSec}, our main result is the following:

\begin{theorem}\label{main}
Consider a numerical semigroup $\sC$ minimally generated by $C$ and obtained by gluing $\sA$ and $\sB$ with $C = k_1 A\sqcup k_2 B$.
\begin{enumerate}
\item
$F_A\otimes F_B$ is a minimal graded free resolution of $R/I_A R+I_B R$.
\item
A minimal graded free resolution of the semigroup ring $\rC$ can be obtained as the mapping cone
of $\rho: F_A\otimes F_B\to F_A\otimes F_B$, where $\rho$ is induced by multiplication by $\rho$.
In particular, $(I_A R+I_B R:\rho) = I_A R+I_B R$.
\end{enumerate}
\end{theorem}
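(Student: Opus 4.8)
The plan is to prove the two parts separately: part (1) by a K\"unneth-type computation, and part (2) by realizing $\rC$ as the cokernel of multiplication by a nonzerodivisor and forming a mapping cone.

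For part (1), I would identify $R=R_A\otimes_k R_B$ and regard $G:=F_A\otimes_k F_B$ as the total complex of the double complex with terms $(F_A)_i\otimes_k(F_B)_j$. Since each $(F_A)_i$ is $R_A$-free and each $(F_B)_j$ is $R_B$-free, every such term is a free $R$-module. To compute homology I would filter by the $A$-degree and use that over the field $k$ each $(F_B)_j$ is flat: tensoring the complex $F_A$, which is exact in positive degrees, with a flat module keeps it exact there, so the only surviving row of the spectral sequence is $i=0$, namely $\rA\otimes_k F_B$, whose homology is $\rA\otimes_k H_j(F_B)$. This vanishes for $j>0$ and equals $\rA\otimes_k\rB=R/(I_AR+I_BR)$ for $j=0$. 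Hence $G$ resolves $R/(I_AR+I_BR)$. Minimality is immediate: the differential of $G$ is $\partial^A\otimes 1\pm 1\otimes\partial^B$, and since $F_A$ and $F_B$ are minimal, its entries lie in $(x_1,\ldots,x_p)R+(y_1,\ldots,y_q)R=\mathfrak m$.

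For part (2), write $D:=R/(I_AR+I_BR)$, so that $G$ is its minimal free resolution. The heart of the matter, and exactly the assertion $(I_AR+I_BR:\rho)=I_AR+I_BR$, is that $\rho$ is a nonzerodivisor on $D$. I would deduce this from the fact that $D$ is a domain: indeed $D\cong\rA\otimes_k\rB$, and since $\rA\hookrightarrow k[t]$ and $\rB\hookrightarrow k[s]$ are inclusions of domains, tensoring over the field $k$ preserves injectivity and yields an embedding $D\hookrightarrow k[t]\otimes_k k[s]=k[t,s]$ into a domain. Under this embedding $\rho$ maps to $t^{k_2}-s^{k_1}\neq 0$ (using $k_2=\sum_i\alpha_i a_i$ and $k_1=\sum_j\beta_j b_j$), so $\rho\neq 0$ in $D$ and is therefore a nonzerodivisor. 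This is the step I expect to carry the real content.

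Finally I would assemble the mapping cone. Lifting multiplication by $\rho$ to a chain endomorphism $\tilde\rho\colon G\to G$ and invoking the short exact sequence $0\to D\xrightarrow{\rho}D\to D/\rho D\to 0$, the long exact homology sequence of $\operatorname{Cone}\tilde\rho$ gives $H_i(\operatorname{Cone}\tilde\rho)=0$ for $i\geq 1$ (with $H_1$ vanishing precisely because $\rho$ is a nonzerodivisor) and $H_0=D/\rho D$. By Proposition \ref{generatorsGluing} this cokernel is $R/(I_AR+I_BR+\langle\rho\rangle)=R/I_C=\rC$. The one remaining point is minimality of the cone, for which I must check that $\tilde\rho$ has all entries in $\mathfrak m$; this holds because the map induced by $\tilde\rho$ on $\operatorname{Tor}^R_\bullet(D,k)=H_\bullet(G\otimes_R k)$ is multiplication by $\rho$, which is the zero map on these $k$-vector spaces since $\rho\in\mathfrak m$, so by minimality of $G$ the reduction $\tilde\rho\otimes_R k$ vanishes and every entry of $\tilde\rho$, hence of the cone differential, lies in $\mathfrak m$.
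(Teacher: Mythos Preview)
Your proof is correct, and for part~(1) it coincides with the paper's: both invoke the standard fact that the tensor product of minimal resolutions in disjoint variable sets resolves the quotient by the sum of the extended ideals, with you spelling out the K\"unneth computation that the paper leaves implicit.

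For part~(2), however, you take a genuinely different route. The paper argues in the \emph{opposite} logical direction: it first establishes exactness of the mapping cone via the Buchsbaum--Eisenbud acyclicity criterion, checking that all Fitting ideals of the cone contain $I_AR+I_BR+(\rho)=I_C$ up to radical and hence have depth $p+q-1$; only afterwards does it read off $(I_AR+I_BR:\rho)=I_AR+I_BR$ as a consequence of $H_1=0$. You instead prove the colon equality first, by embedding $D=\rA\otimes_k\rB$ into the domain $k[t]\otimes_k k[s]=k[t,s]$ and observing that $\rho\mapsto t^{k_2}-s^{k_1}\neq 0$, so $\rho$ is a nonzerodivisor on $D$; exactness of the cone then follows from the short exact sequence $0\to D\xrightarrow{\rho}D\to D/\rho D\to 0$. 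Your approach is more elementary in that it avoids the acyclicity criterion and the verification of depths of Fitting ideals, and it makes transparent why the key fact is precisely that $\rho$ is regular on $D$. The paper's approach, on the other hand, handles all homological degrees uniformly and does not rely on identifying $D$ concretely.

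One minor simplification you could make: there is no need to lift multiplication by $\rho$ to an abstract chain map $\tilde\rho$ and then argue via $\operatorname{Tor}$ that its entries lie in $\mathfrak m$. Since $\rho\in R$ is central, literal multiplication by $\rho$ on each $G_i$ is already a chain map commuting with the differential, and its matrix is $\rho\cdot\mathrm{id}$, whose entries are visibly in $\mathfrak m$. This is what the paper does, and it makes minimality of the cone immediate. Your $\operatorname{Tor}$ argument is correct and would be needed for a general comparison map, but it is more than the situation requires.
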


\begin{proof}
The minimal free resolutions $F_A$ and $F_B$ of $\rA\simeq R_A/I_A$ and $\rB\simeq R_B/I_B$ can be viewed as
minimal free resolutions of the $R$-modules  $R/I_AR$ and $R/I_BR$ (with the same maps).
Since $I_A R$ and $I_B R$ are generated by elements involving disjoint sets of variables in $R$,
$G=F_A\otimes F_B $ is a minimal graded free resolution of  $R/(I_A R+I_B R)$.

\medskip
Now consider the complex map on $G\to G$ induced by multiplication by $\rho$.  All the maps in the complex are also multiplication by $\rho$ and hence this is a complex map.  We note that  $I_AR+I_BR+\rho $ is the prime ideal $I_C$ of height $n-1$.  In the mapping cone $M(\rho)$ of the multiplication map $\rho: G\to G$, all of the fitting ideals contain $I_A R+I_B R+\rho$ up to radical and hence are of depth $n-1$.
So, by the exactness criterion \cite[Cor. 1]{BE1}, the mapping
cone is acyclic and resolves $I_C= I_A R+I_B R+\rho$.

\medskip
Minimality of our resolutions are straightforward.
Since $F_A$ and $F_B$ are minimal,
one has that $\partial_i^{A}((F_A)_i)\subset (x_1, \ldots, x_p)(F_A)_{i-1}$ and
$\partial_j^{B}((F_B)_j)\subset  (y_1, \ldots, y_q)(F_B)_{j-1}$ for $1\leq i\leq p-1$ and $1\leq j\leq q-1$.
Thus, denoting by $\partial^G_\bullet$ the differentials in $G$, one has that
$\partial^{G}_k(G_k)\subset (x_1, \ldots, x_p, y_1, \ldots y_q)G_{k-1}$ for all $k$, $1\leq k\leq p+q-2$, and hence $G$ resolves $R/I_A R+I_B R$ minimally.
Finally, since the maps in the complex map $G\to G$ are all multiplication by $\rho$ and $\rho \in (x_1, \ldots, x_p, y_1\ldots, y_q)$,
we see that if $\partial_\bullet$ are the differentials in the mapping cone $M(\rho)$, for all $k$, $1\leq k\leq p+q-1$, one has
$\partial_{k}(M(\rho)_k)\subset (x_1, \ldots, x_p, y_1, \ldots y_q)M(\rho)_{k-1}$, and $M(\rho)$ resolves $R/I_C$ minimally.

\medskip
As a consequence, we see that the $H_1(M(\rho)) =0$ and hence $(I_A R+I_B R :\rho) = I_A R+I_B R$.
\end{proof}

\begin{example}\label{ex1}
Consider the decomposable numerical semigroup $\sC$ generated by
$$
C=\{187,289,425,323,140,364,336\}=k_1 A\sqcup k_2 B
$$
with $A=\{11, 17, 25, 19\}$, $B=\{5,13,12\}$,  
$k_1=17=5+12\in\sB$ and $k_2=28=11+17\in\sA$. 
Using {\tt Singular} \cite{singular}, one gets that the
ideal $I_A\subset R_A=k[x_1,\ldots,x_4]$ is minimally generated by 5 binomials,
$f_1=x_1x_3-x_2x_4$, $f_2=x_4^4-x_2^3x_3$, $f_3=x_2^4-x_1x_4^3$, $f_4=x_3^2-x_1^3x_2$, $f_5=x_1^4-x_3x_4$,
the ring $\rA\simeq R_A/I_A$ is Gorenstein, and its minimal free resolution is
$$
0\rar R_A
\stackrel{{\tiny
\left(\begin{array}{c}
f_1\\ f_2\\ f_3 \\ f_4\\ f_5
\end{array}\right)}
}{\lar}  (R_A)^5
\stackrel{{\tiny
\left(\begin{array}{rrrrr}
0          & x_3  & x_1^3& x_2^3& x_4^3   \\
-x_3     & 0     & 0        & x_1& x_2           \\
-x_1^3 & 0     & 0        & x_4  & x_3        \\
-x_2^3 & -x_1& -x_4    & 0       & 0               \\
-x_4^3 & -x_2 & -x_3   & 0        & 0
\end{array}\right)}
}{\lar}  (R_A)^5
\stackrel{{\tiny
\left(\begin{array}{c} f_1\ \ldots\  f_5\end{array}\right)}
}{\lar} R_A\rar \rA\rar 0\,.
$$
The ideal $I_B\subset R_B=k[y_1,y_2,y_3]$ is generated by 3 binomials, $g_1=y_3^3- y_1^2y_2^2$,
$g_2=y_1^5-y_2y_3$,
$g_3=y_2^3-y_1^3y_3^2$,
it is Hilbert-Burch, and its minimal free resolution is
$$
0\rar (R_B)^2
\stackrel{{\tiny
\left(\begin{array}{rr}
 y_1^3 & y_2    \\
y_2^2 & y_3^2   \\
y_3 & y_1^2
\end{array}\right)}
}{\lar}
(R_B)^3
\stackrel{{\tiny
\left(\begin{array}{c}g_1\ g_2\  g_3\end{array}\right)}
}{\lar}
R_B\rar \rB\rar 0\,.
$$
The tensor product of these two resolutions provides a minimal free resolution of $R/J$  where $J=I_A R+I_B R$:
$$
0\rar R^2\rar R^{13}\rar R^{26}\rar R^{22}\rar R^8\rar R\rar R/J\rar 0\,.
$$
Note that the differentials can be easily written down if needed. Finally, the extra minimal generator in
$I_C$ is $\rho=x_1x_2-y_1y_3$ and the mapping cone induced by multiplication by $\rho$ gives a minimal resolution of
$\rC$,
$$
0\rar R^2\rar R^{15}\rar R^{39}\rar R^{48}\rar R^{30}\rar R^9\rar R\rar \rC\rar 0\,.
$$
Again, the differentials are known.
\end{example}

Of course, the differentials in the resolution of $\rC$ depend on $k_1$ and $k_2$ when the numerical semigroup decomposes
as $C = k_1A\sqcup k_2B$ but the Betti numbers do not as our next result shows.
For any $R$-module $M$, we denote by $\beta_i(M)$ its Betti numbers with the convention that $\beta_i(M)=0$ for $i<0$ and $i>\hbox{\rm pd}(M)$,
the projective dimension of $M$.
For the semigroup ring $\rC$ associated to a numerical semigroup $\sC$, we will use the notation $\beta_{i}(C)=\beta_{i}(\rC)$.

\begin{corollary}\label{BettiNumbers}
If the numerical semigroup $\sC$ is obtained by gluing $\sA$ and $\sB$, then
$$
\forall i\geq 0,\
\beta_i(C) =
\sum _{i'=0}^i \beta_{i'}(A)[\beta_{i-i'}(B)+\beta_{i-i'-1}(B)]\,.
$$
\end{corollary}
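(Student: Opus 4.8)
The plan is to read both invariants off the minimal resolution produced in Theorem \ref{main}, using that minimality lets us identify ranks of free modules with Betti numbers. First I would record the Betti numbers of the intermediate module $G=R/(I_A R+I_B R)$. By part (1) of Theorem \ref{main}, $F_A\otimes F_B$ is a \emph{minimal} graded free resolution of $G$, and its $i$-th free module is $\bigoplus_{i'=0}^{i}(F_A)_{i'}\otimes(F_B)_{i-i'}$. Since the resolution is minimal, the rank of this module is exactly $\beta_i(G)$, so
\begin{equation}\label{betaG}
\beta_i(G)=\sum_{i'=0}^{i}\beta_{i'}(A)\,\beta_{i-i'}(B)\,.
\end{equation}

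Next I would pass to the mapping cone. By part (2), the minimal graded free resolution of $\rC$ is the mapping cone $M(\rho)$ of multiplication by $\rho$ on $G_\bullet=F_A\otimes F_B$, whose $k$-th term is $M(\rho)_k=G_k\oplus G_{k-1}$. Because Theorem \ref{main} already establishes that $M(\rho)$ is minimal, the Betti number $\beta_k(C)$ equals the rank of $M(\rho)_k$, giving
\begin{equation}\label{betaC}
\beta_k(C)=\beta_k(G)+\beta_{k-1}(G)\,.
\end{equation}

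Finally I would substitute \eqref{betaG} into \eqref{betaC} and reindex. Writing out the two convolutions,
$$
\beta_k(C)=\sum_{i'=0}^{k}\beta_{i'}(A)\beta_{k-i'}(B)+\sum_{i'=0}^{k-1}\beta_{i'}(A)\beta_{k-1-i'}(B)\,,
$$
I would use the convention $\beta_{-1}(B)=0$ to extend the second sum up to $i'=k$ (the new term $\beta_k(A)\beta_{-1}(B)$ vanishes) and then combine the sums term by term to obtain $\sum_{i'=0}^{k}\beta_{i'}(A)[\beta_{k-i'}(B)+\beta_{k-i'-1}(B)]$, which is the claimed formula. The substance of the argument is entirely contained in Theorem \ref{main}: the only thing that could obstruct \eqref{betaC} would be cancellation in the mapping cone rendering the resolution non-minimal, and this has already been ruled out there. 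Consequently there is no genuine obstacle at this stage; the only care needed is the bookkeeping around the boundary index via the convention $\beta_{-1}=0$.
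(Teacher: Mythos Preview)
Your proof is correct and follows essentially the same approach as the paper: read off $\beta_i(R/(I_AR+I_BR))$ from the tensor product resolution, then use the mapping cone structure to get $\beta_i(C)=\beta_i(R/J)+\beta_{i-1}(R/J)$, and combine. The only cosmetic difference is that the paper handles the boundary indices $i=0$ and $i=p+q-1$ explicitly rather than via the convention $\beta_{-1}=0$.
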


\begin{proof}
Set $J=I_A R+I_B R$.
Since the resolution of $R/J$ is the tensor product of the resolutions of $\rA$ and $\rB$,
$\displaystyle{
\beta_i(R/J) = \sum_{i'=0}^i \beta_{i'}(A)\beta_{i-i'}(B)
}$
for $i=0,\ldots,p+q-2$.
One can write
$\displaystyle{
\beta_i(R/J) = \sum_{i'\geq 0} \beta_{i'}(A)\beta_{i-i'}(B)
}$ because $\beta_{i-i'}(B)=0$ if $i'>i$, and this formula holds
for all $i\geq 0$ since for $i>p+q-2$, either $i'>p-1$ or $i-i'>q-1$ and hence
$\beta_{i'}(A)\beta_{i-i'}(B)=0$.
Note in particular that
$\beta_{p+q-2}(R/J) =\beta_{p-1}(A)\beta_{q-1}(B)$.
Now the resolution of $\rC$ is the mapping cone of the resolution of $R/J$ induced by multiplication by $\rho$ and hence
$\beta_0(C)=1$,
$\beta_i(C) = \beta_i(R/J)+\beta_{i-1}(R/J)$ for $i=1,\ldots,p+q-2$, and
$\beta_{p+q-1}(C) = \beta_{p+q-2}(R/J)$,
and the result follows.
\end{proof}

\begin{remark}
In Corollary \ref{BettiNumbers}, $A$ and $B$ play the same role and hence the symmetric formula for the Betti numbers also holds:
$$
\forall i\geq 0,\
\beta_i(C) =
\sum _{i'=0}^i \beta_{i'}(B)[\beta_{i-i'}(A)+\beta_{i-i'-1}(A)]\,.
$$
\end{remark}

\begin{example}\label{ex2}
Consider the decomposable numerical semigroup $\sC$ generated by
$$
C=\{450,522,612,576,305,793,732\}=k_1 A\sqcup k_2 B
$$
with $A=\{25,29,34,32\}$, $B=\{5,13,12\}$, $k_1=18=5+13\in\sB$ and $k_2=61=29+32\in\sA$.
Using {\tt Singular}, one gets that the Betti numbers of $\rA$ and $\rB$ are
$$
\begin{array}{|c|c|c|c|c|}
 \hline
i&0&1&2&3\\ \hline
\beta_i(A)&1&7&10&4 \\ \hline
\end{array}
\qquad\qquad
\begin{array}{|c|c|c|c|}
 \hline
i&0&1&2\\ \hline
\beta_i(B)&1&3&2 \\ \hline
\end{array}
$$
Applying the formula in Corollary~\ref{BettiNumbers} and the remark after the corollary, one gets that $\beta_0(C)=1$, and
$$\begin{array}{rcl}
\beta_1(C)&=&1(3+1)+7.1=1(7+1)+3.1=11,\\
\beta_2(C)&=&1(2+3)+7(3+1)+10.1=1.(10+7)+3(7+1)+2.1=43,\\
\beta_3(C)&=&1.2+7(2+3)+10(3+1)+4.1=1.(4+10)+3(10+7)+2(7+1)=81,\\
\beta_4(C)&=&7.2+10(2+3)+4(3+1)=1.4+3(4+10)+2(10+7)=80,\\
\beta_5(C)&=&10.2+4(2+3)=3.4+2(4+10)=40,\\
\beta_6(C)&=&4.2 = 8,
\end{array}$$
and the minimal free resolution of $\rC$ shows as
$$
0\rar R^8\rar R^{40}\rar R^{80}\rar R^{81}\rar R^{43}\rar R^{11}\rar R\rar \rC\rar 0\,.
$$
\end{example}

As a direct consequence of Corollary \ref{BettiNumbers}, one gets that if $A$ and $B$ have $p$ and $q$ elements respectively,
then the last nonzero Betti number of $\rC$ is
$\beta_{p+q-1}(C) =\beta_{p-1}(A)\beta_{q-1}(B)$. In other words, one has the following result:

\begin{corollary}\label{CMtype}
If the numerical semigroup $\sC$ is obtained by gluing two numerical semigroups $\sA$ and $\sB$,
then the Cohen-Macaulay type of $\rC$ is the product of the Cohen-Macaulay types of $\rA$ and $\rB$.
\end{corollary}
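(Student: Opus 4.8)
The plan is to identify the Cohen-Macaulay type of each of the three semigroup rings with the rank of the last free module in its minimal graded free resolution, and then to read off the desired product directly from the Betti number computation already carried out in Corollary \ref{BettiNumbers}.

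First I would recall the standard characterization of the Cohen-Macaulay type. Since $\rC$ has Krull dimension one and $R$ is regular of Krull dimension $p+q$ (the embedding dimension of $\sC$ equals $p+q$ because $C=k_1A\sqcup k_2B$ minimally generates $\sC$ and $\#A=p$, $\#B=q$), the Auslander--Buchsbaum formula gives $\pd_R(\rC)=p+q-1$, so the minimal graded free resolution of $\rC$ has length exactly $p+q-1$. The Cohen-Macaulay type of $\rC$, defined as the minimal number of generators of the canonical module $\omega_{\rC}\cong\operatorname{Ext}^{p+q-1}_R(\rC,R)$ (up to twist), equals $\dim_k\operatorname{Ext}^{p+q-1}_R(\rC,k)=\beta_{p+q-1}(C)$, the last nonzero Betti number. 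Applying the same reasoning to $\rA$ and $\rB$, whose embedding dimensions are $p$ and $q$, shows that their Cohen-Macaulay types are $\beta_{p-1}(A)$ and $\beta_{q-1}(B)$ respectively.

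It then remains to invoke the identity $\beta_{p+q-1}(C)=\beta_{p-1}(A)\,\beta_{q-1}(B)$ recorded just before the statement. This is the top-degree instance of Corollary \ref{BettiNumbers}: setting $J=I_AR+I_BR$, the tensor product resolution $F_A\otimes F_B$ of $R/J$ has length $(p-1)+(q-1)=p+q-2$, so its last nonzero Betti number is $\beta_{p+q-2}(R/J)=\beta_{p-1}(A)\,\beta_{q-1}(B)$, coming from the single surviving summand $(F_A)_{p-1}\otimes(F_B)_{q-1}$. In the mapping-cone resolution of $\rC$ one has $\beta_i(C)=\beta_i(R/J)+\beta_{i-1}(R/J)$, and in top degree $\beta_{p+q-1}(C)=\beta_{p+q-1}(R/J)+\beta_{p+q-2}(R/J)=\beta_{p+q-2}(R/J)$ since $\beta_{p+q-1}(R/J)=0$. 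Combining the three identifications yields that the type of $\rC$ is the product of the types of $\rA$ and $\rB$.

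The argument presents no genuine obstacle: everything in sight is Cohen-Macaulay, so the passage from \emph{last Betti number} to \emph{Cohen-Macaulay type} is automatic via Auslander--Buchsbaum and local duality, and the required product formula for the top Betti number has already been established. The only point demanding a word of care is the vanishing $\beta_{p+q-1}(R/J)=0$ that keeps the mapping cone from merging an extra contribution into the top term; but this is immediate from the fact that $R/J$ has projective dimension $p+q-2$.
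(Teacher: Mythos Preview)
Your proof is correct and follows exactly the approach the paper takes: the paper simply observes, in the sentence preceding the corollary, that Corollary~\ref{BettiNumbers} yields $\beta_{p+q-1}(C)=\beta_{p-1}(A)\beta_{q-1}(B)$ and states the result as an immediate consequence. Your write-up is more explicit in justifying why the Cohen--Macaulay type equals the last nonzero Betti number (via Auslander--Buchsbaum and the canonical module), a point the paper leaves implicit, but the underlying argument is the same.
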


The {\it type} of a numerical semigroup $\sC$ is defined in \cite{Frobergetal} as the number of elements in the set
$C'=\{x\in\Z/\,x\notin \sC\hbox{ and }x+s\in \sC, \forall s\in \sC\setminus\{0\}\}$.
In \cite[Prop.~6.6]{nari} it is shown that the type of a numerical semigroup obtained by gluing two semigroups is the
product of the types of the two semigroups. In order to see that this is the same result as Corollary \ref{CMtype},
we need to show the following result that we will prove since we could not find any reference for it.

\begin{lemma}\label{typeCMtype}
The type of any numerical semigroup $\sC$
coincides with the Cohen-Macaulay type of its semigroup ring $\rC$.
\end{lemma}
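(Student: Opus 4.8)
The plan is to compute both quantities as the dimension of a single socle. Since $\rC$ is a one-dimensional Cohen-Macaulay graded ring and $t^{c_1}$ is a nonzero homogeneous element of the domain $\rC$, it is a nonzerodivisor and forms a maximal homogeneous regular sequence; hence the Cohen-Macaulay type of $\rC$ equals $\dim_k\operatorname{soc}(\bar S)$, where $\bar S=\rC/t^{c_1}\rC$ is Artinian and $\operatorname{soc}(\bar S)=(0:_{\bar S}\mathfrak m)$ with $\mathfrak m=(t^{c_1},\dots,t^{c_n})$. This is the standard identification of the type of a Cohen-Macaulay local (or $*$-local graded) ring with the socle dimension of an Artinian reduction, and it agrees with the last Betti number $\beta_{n-1}(\rC)$ used in Corollary \ref{CMtype}. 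To read off a $k$-basis of $\bar S$, note that $\rC$ has basis $\{t^s : s\in\sC\}$ and $t^{c_1}\rC$ has basis $\{t^s : s\in c_1+\sC\}$, so the classes $\{\,\overline{t^w} : w\in W\,\}$ form a basis of $\bar S$, where $W=\{w\in\sC : w-c_1\notin\sC\}$ is the Apéry set of $c_1$ in $\sC$.

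Next I would determine which classes $\overline{t^w}$ lie in the socle. By definition $\overline{t^w}\in\operatorname{soc}(\bar S)$ if and only if $t^{c_i}t^w\in t^{c_1}\rC$ for every generator, that is, $w+c_i\in c_1+\sC$ for all $i$. The key point is that this condition on the generators is equivalent to $w+s\in c_1+\sC$ for every nonzero $s\in\sC$: one direction is immediate, and for the converse one writes $s=c_{i_1}+\cdots+c_{i_r}$ with $r\geq1$ and uses $w+c_{i_1}\in c_1+\sC$ to absorb the remaining generators into $\sC$. Phrased in terms of the partial order $a\preceq b\iff b-a\in\sC$, this says exactly that $w$ is a maximal element of $W$, since adding any nonzero element of $\sC$ carries $w$ out of the Apéry set $W$ and into $c_1+\sC$.

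Finally I would set up the bijection with the pseudo-Frobenius numbers $C'$. For a maximal $w\in W$, put $x=w-c_1$; then $x\notin\sC$ because $w\in W$, while $x+s=(w+s)-c_1\in\sC$ for every nonzero $s\in\sC$ by the maximality just established, so $x\in C'$. Conversely, if $x\in C'$ then $w=x+c_1$ lies in $\sC$ with $w-c_1=x\notin\sC$, so $w\in W$, and the pseudo-Frobenius condition on $x$ forces $w$ to be maximal. Thus $w\mapsto w-c_1$ is a bijection between the maximal elements of $W$ and $C'$, giving $\dim_k\operatorname{soc}(\bar S)=|C'|$; combined with the first step, this shows that the type $|C'|$ equals the Cohen-Macaulay type of $\rC$. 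The main obstacle I anticipate is expository rather than deep: making the first step precise in the graded setting (identifying the Cohen-Macaulay type of $\rC$ with $\dim_k\operatorname{soc}(\rC/t^{c_1}\rC)$), and proving cleanly the generators-versus-all-of-$\sC$ equivalence that converts the socle condition into maximality in the Apéry set.
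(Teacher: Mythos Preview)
Your argument is correct and follows essentially the same route as the paper: identify the Cohen--Macaulay type with the number of maximal elements of the Ap\'ery set of a generator, and then set up the bijection $w\mapsto w-c_1$ with the pseudo-Frobenius set $C'$. The only difference is that the paper imports the first step from \cite[Thm.~4.2(ii)]{CG} (applied with the smallest generator~$e$), whereas you prove it directly by computing the socle of the Artinian reduction $\rC/t^{c_1}\rC$; your version is more self-contained but otherwise the same proof.
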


\begin{proof}
The Cohen-Macaulay type of any simplicial Cohen-Macaulay semigroup is computed in \cite[Thm.~4.2(ii)]{CG}. Applying this result
to a numerical semigroup $\sC$ minimally generated by $C$,
if $e$ is the smallest element in $C$ and $A=C\setminus\{e\}$, the Cohen-Macaulay type of $\rC$ is the number of
elements in the set $Q=\{q\in\sC/\, q-e\notin \sC\}$ that are maximal in the sence that $q+a\notin Q$ for all $a\in A$.
Now one can easily check that $q\in Q$ is maximal if and only if $q-e\in S'$, and the result follows.
\end{proof}

The following corollary gives more direct consequences of Corollaries \ref{BettiNumbers} and \ref{CMtype}:

\begin{corollary}\label{Gorenstein}
Let $\sC$ be a numerical semigroup obtained by gluing $\sA$ and $\sB$.
\begin{enumerate}
\item\label{gorItem}
$\rC$ is Gorenstein, respectively a complete intersection, if and only if
$\rA$ and $\rB$
are both Gorenstein, respectively complete intersections.
\item
If neither
$\rA$
nor
$\rB$
is Gorenstein, then the Cohen-Macaulay type of
$\rC$
is not prime.
\end{enumerate}
\end{corollary}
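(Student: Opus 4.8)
The plan is to deduce both statements directly from Corollaries \ref{BettiNumbers} and \ref{CMtype}, together with two standard facts about one-dimensional Cohen-Macaulay rings: first, that such a ring is Gorenstein if and only if its Cohen-Macaulay type equals $1$; and second, that a Cohen-Macaulay quotient $R/I$ is a complete intersection if and only if the minimal number of generators of $I$ equals its height. Throughout I use that $\rA$, $\rB$ and $\rC$ are all Cohen-Macaulay of Krull dimension $1$.

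For the Gorenstein assertion in (\ref{gorItem}), I would invoke Corollary \ref{CMtype}: the Cohen-Macaulay type of $\rC$ is the product of the types of $\rA$ and $\rB$. Since the type of a Cohen-Macaulay ring is a positive integer and Gorensteinness is equivalent to type $1$, a product of positive integers equals $1$ precisely when both factors equal $1$. Hence $\rC$ is Gorenstein if and only if both $\rA$ and $\rB$ are.

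For the complete intersection assertion, I would first extract, either from Corollary \ref{BettiNumbers} or directly from Proposition \ref{generatorsGluing}, the identity $\beta_1(C)=\beta_1(A)+\beta_1(B)+1$, reflecting that the minimal generators of $I_C$ are those of $I_A$, those of $I_B$, and the extra binomial $\rho$. The relevant heights are $\mathrm{ht}(I_C)=p+q-1$, $\mathrm{ht}(I_A)=p-1$ and $\mathrm{ht}(I_B)=q-1$, and since the minimal number of generators of an ideal is always at least its height, one has $\beta_1(A)\geq p-1$ and $\beta_1(B)\geq q-1$. The complete intersection condition for $\rC$ reads $\beta_1(C)=p+q-1$, which forces $\beta_1(A)+\beta_1(B)=(p-1)+(q-1)$; by the two lower bounds this can only occur when $\beta_1(A)=p-1$ and $\beta_1(B)=q-1$, that is, exactly when both $\rA$ and $\rB$ are complete intersections. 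The converse is immediate from the same identity.

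Finally, part (2) follows once more from Corollary \ref{CMtype}. If neither $\rA$ nor $\rB$ is Gorenstein, then each of their types is at least $2$, so the type of $\rC$, being their product, is a product of two integers each at least $2$ and is therefore composite, hence not prime. I expect no serious obstacle here: the only point requiring care is the complete intersection step, where one must recall that equality between the minimal number of generators and the height characterizes complete intersections among Cohen-Macaulay ideals, and combine it with the generator count for $I_C$.
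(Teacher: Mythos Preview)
Your argument is correct and matches the paper's approach: the paper states this corollary without proof, simply as a ``direct consequence of Corollaries \ref{BettiNumbers} and \ref{CMtype}'', and you have supplied exactly the details one would expect---using Corollary \ref{CMtype} for the Gorenstein statement and part (2), and the identity $\beta_1(C)=\beta_1(A)+\beta_1(B)+1$ (from Corollary \ref{BettiNumbers} or Proposition \ref{generatorsGluing}) together with the height characterization of complete intersections for the remaining assertion.
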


\begin{remark}
The result for complete intersections is stronger than what is stated in Corollary \ref{Gorenstein} (\ref{gorItem}): every complete intersection
numerical semigroup is indeed the gluing of two complete intersection subsemigroups.
This was shown in \cite{rosales} and previously in \cite{De}; see \cite[Thm. 9.10]{sgBook}.
Note that this result does not hold for Gorenstein semigroups since there exist indecomposable Gorenstein semigroups
when the embedding dimension is $\geq 4$ as we will see later in example \ref{ex5}.
\end{remark}

One can be more precise and compute the graded Betti numbers of a decomposable numerical semigroup.
For any graded $R$-module $M$, we denote by $\beta_{ij}(M)$ the number of syzygies of degree $j$ at the $i$-th step
of a minimal graded free resolution. The set of integers $\{\beta_{ij}(M)\}$ are the {\it graded Betti numbers} of $M$. As for global Betti numbers,
for the semigroup ring $\rC$ associated to a numerical semigroup $\sC$, we will use the notation $\beta_{ij}(C)=\beta_{ij}(\rC)$.

\begin{proposition}\label{gradedBettiNumbers}
If the numerical semigroup $\sC$ decomposes as $C = k_1A\sqcup k_2B$, then
$$
\beta_{i,j}(C)=
\sum_{i'=0}^{i}\Bigg (
\sum_{\tiny
r,s/k_1r+k_2s =j
}
\beta_{i'r}(A)[ \beta_{i-i',s}(B)+\beta_{i-i'-1,s-k_1}(B)]\Bigg )\,.
$$
\end{proposition}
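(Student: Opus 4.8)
The plan is to run the same two-step argument as in the proof of Corollary \ref{BettiNumbers}, but now carrying the internal degree through every step. The two inputs are the \emph{graded} Betti numbers of $R/J$ with $J=I_AR+I_BR$, read off from the tensor product resolution $G=F_A\otimes F_B$, and the degree twist introduced by the mapping cone of multiplication by $\rho$. The only real work is bookkeeping the gradings.

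First I would record how the $A$- and $B$-gradings transform when $F_A$ and $F_B$ are viewed as resolutions over $R$ with the weights coming from $C$. In $R_A$ the variable $x_i$ has weight $a_i$, whereas in $R$ it has weight $k_1a_i$; hence a monomial of $A$-degree $r$ acquires $R$-degree $k_1r$, so the $A$-grading is uniformly rescaled by $k_1$. Writing $(F_A)_{i'}=\bigoplus_r R_A(-r)^{\beta_{i'r}(A)}$, over $R$ this becomes $(F_A)_{i'}=\bigoplus_r R(-k_1r)^{\beta_{i'r}(A)}$, and likewise $(F_B)_{i''}=\bigoplus_s R(-k_2s)^{\beta_{i''s}(B)}$ with the $B$-grading scaled by $k_2$. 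Tensoring adds the twists, so the degree-$j$ part of $G_i=\bigoplus_{i'+i''=i}(F_A)_{i'}\otimes(F_B)_{i''}$ has the predicted rank. Since by part (1) of Theorem \ref{main} this tensor complex is a \emph{minimal} resolution of $R/J$, I obtain
$$
\beta_{i,j}(R/J)=\sum_{i'=0}^{i}\ \sum_{k_1r+k_2s=j}\beta_{i'r}(A)\,\beta_{i-i',s}(B)\,.
$$

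Next I would pass to the mapping cone. The binomial $\rho$ is homogeneous of degree $k_1k_2$, so multiplication by $\rho$ is a degree-$k_1k_2$ endomorphism of $G$ lifting the inclusion $(R/J)(-k_1k_2)\xrightarrow{\rho}R/J$, whose $\coker$ is $\rC$; injectivity here is exactly the statement $(J:\rho)=J$ from Theorem \ref{main}. Hence the mapping cone has $M(\rho)_i=G_i\oplus G_{i-1}(-k_1k_2)$, and as this resolution is minimal, comparing generators in each internal degree gives
$$
\beta_{i,j}(C)=\beta_{i,j}(R/J)+\beta_{i-1,\,j-k_1k_2}(R/J)\,.
$$

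Finally I would substitute the displayed formula for $\beta_{\bullet,\bullet}(R/J)$ into both terms. The first term reproduces the $\beta_{i-i',s}(B)$ summand verbatim. For the second I would reindex the inner sum by $s\mapsto s-k_1$: a pair with $k_1r+k_2s'=j-k_1k_2$ corresponds bijectively to $s=s'+k_1$ satisfying $k_1r+k_2s=j$, converting $\beta_{i-1-i',s'}(B)$ into $\beta_{i-i'-1,\,s-k_1}(B)$ and yielding exactly the stated identity. The one genuinely delicate point is keeping the two independent degree twists straight—the rescaling of the $A$- and $B$-gradings by $k_1$ and $k_2$ on passage to $R$, versus the shift by $\deg\rho=k_1k_2$ from the cone—and verifying that the substitution $s\mapsto s-k_1$ is a bijection on the relevant lattice of exponent pairs; once the conventions are pinned down, the rest is a routine reindexing.
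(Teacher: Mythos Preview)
Your proof is correct and follows essentially the same approach as the paper: both compute $\beta_{i,j}(R/J)$ from the tensor product $F_A\otimes F_B$ after rescaling the $A$- and $B$-gradings by $k_1$ and $k_2$, then use the mapping cone with the degree-$k_1k_2$ shift to obtain $\beta_{i,j}(C)=\beta_{i,j}(R/J)+\beta_{i-1,j-k_1k_2}(R/J)$, and finally substitute and reindex via $s'\mapsto s-k_1$. The paper additionally notes that the outer sum in the second term can be extended from $i'\le i-1$ to $i'\le i$ since $\beta_{-1,\bullet}(B)=0$, which is implicit in your final formula.
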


\begin{proof}
The graded Betti numbers of $\langle k_1A\rangle$ and $\langle k_2B\rangle$ are
$\beta_{i,k_1j}(k_1A)=\beta_{ij}(A)$ and
$\beta_{i,k_2j}(k_2B)=\beta_{ij}(B)$.
Set $J=I_A R+I_B R$.
The resolution  of $R/J$ is  $F_A\otimes F_B$ and hence, at the $i$-th step of the
resolution, for $1\leq i\leq p+q-2$, there will be a generator of degree $k_1r+k_2s$
any time $\beta_{i',r}(A)\neq 0$ and $\beta_{i-i',s}(B)\neq 0$ for some $i'$, $0\leq i'\leq i$.
In other words, if we set
$t_i(A) =\{j\in\N\,/\ \beta_{ij}(A)\neq 0\}$ for all $i\in\{0,\ldots,p-1\}$ and $t_i(B) =\{j\in\N\,/\ \beta_{ij}(B)\neq 0\}$ for all $i\in\{0,\ldots,q-1\}$,
the minimal free resolution of $R/J$ is
$$
0\to F_{p+q-2} \to \cdots F_i \to F_{i-1} \to\cdots\to F_1\to  R\to R/J\to 0
$$
with
$\displaystyle{
F_i=\bigoplus_{i'=0}^{i} \Big(
\bigoplus_{\substack{r\in t_{i'}(A)\\ s\in t_{i-i'}(B)}}
R(-k_1r-k_2s)^{\beta_{i'r}(A) \beta_{i-i',s}(B)}\Big)
}$.
Thus, the graded Betti numbers of $R/J$ are, for $0\leq i\leq p+q-2$,
$$
\beta_{i,j}(R/J)=
\sum_{i'=0}^{i}\Bigg(
\sum_{\substack{ r\in t_{i'}(A),s\in t_{i-i'}(B)/ \\  j=k_1r+k_2s  }}
\beta_{i'r}(A) \beta_{i-i',s}(B)\Bigg)\,.
$$
Now the resolution of $\rC$ is the mapping cone of $\rho: F_A\otimes F_B \to F_A\otimes F_B$ induced by multiplication by $\rho $
which has degree $k_1k_2$.
Hence the graded Betti numbers of $\rC$ are $\beta_{00}(C)=1$,
$\beta_{ij}(C) = \beta_{ij}(R/J)+\beta _{i-1,j-k_1k_2}(R/J)$ for $1\leq i\leq p+q-2$,
and
$\displaystyle{\beta_{p+q-1,j}(C)=
\beta _{p+q-2,j-k_1k_2}(R/J)
}$, and we have, for all $i,j\geq 0$,\\
{\footnotesize
\begin{eqnarray*}
\beta_{ij}(C) &=&
\sum_{i'=0}^{i}\Bigg (
\sum_{
r,s/  j=k_1r+k_2s
}
\beta_{i'r}(A) \beta_{i-i',s}(B)\Bigg )+
\sum_{i'=0}^{i-1}\Bigg (
\sum_{
r,s/  j=k_1r+k_2s+k_1k_2
}
\beta_{i'r}(A) \beta_{i-i'-1,s}(B)\Bigg )\\
&=&
\sum_{i'=0}^{i}\Bigg (
\sum_{
r,s/  j=k_1r+k_2s
}
\beta_{i'r}(A) \beta_{i-i',s}(B)\Bigg )+
\sum_{i'=0}^{i}\Bigg (
\sum_{
r,s/  j=k_1r+k_2s
}
\beta_{i'r}(A) \beta_{i-i'-1,s-k_1}(B)\Bigg )
\end{eqnarray*}}\\
because in the second summand, for $i'=i$, one has $i-i'-1<0$, and the result follows.
\end{proof}

\begin{remark}
As for Corollary \ref{BettiNumbers}, the formula obtained by exchanging $A$ and $B$ in the formula in Proposition \ref{gradedBettiNumbers} also holds.
\end{remark}

For any graded $R$-module $M$, the {\it Castelnuovo-Mumford regularity} (or {\it regularity} for short) of $M$, $\reg (M)$, is
$$
\reg (M)=\max\{j-i, i\geq 0,j\geq 0\,/\beta_{ij}(M)\neq 0\}\,.
$$
Again,
for the semigroup ring $\rC$ associated to a numerical semigroup $\sC$, we will use the notation $\reg(C)=\reg(\rC)$.
In this context, the regularity is sometimes called the {\it weighted regularity} since we give weights to the variables to make our
semigroup rings graded modules over the corresponding polynomial ring.

\begin{corollary}{\label{regularity}}
If the numerical semigroup $\sC$ is the gluing of two numerical semigroups $\sA$ and $\sB$
with $C= k_1A\sqcup k_2B$, $\# A=p$ and  $\# B=q$, then
$$\reg(C)=k_1\reg(A)+k_2\reg(B)+(p-1)(k_1-1)+(q-1)(k_2-1)+k_1k_2-1\,.$$
\end{corollary}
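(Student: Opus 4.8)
The plan is to read off $\reg(C)$ directly from the two structural descriptions already established: by Theorem \ref{main} the minimal resolution of $\rC$ is the mapping cone of multiplication by $\rho$ (which has degree $k_1k_2$) on the minimal resolution of $R/J$, $J=I_AR+I_BR$, and the latter is the tensor product $F_A\otimes F_B$. I would work from the graded Betti numbers in Proposition \ref{gradedBettiNumbers}. Since both the mapping cone and the tensor product are minimal (Theorem \ref{main}), all the relevant Betti numbers are non-negative and add without cancellation, so the set of pairs $(i,j)$ with $\beta_{ij}(C)\neq 0$ is completely transparent and $\reg(C)$ is simply the maximum of $j-i$ over that set.

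First I would reduce the mapping cone. From $\beta_{ij}(C)=\beta_{ij}(R/J)+\beta_{i-1,j-k_1k_2}(R/J)$, the first summand contributes $\reg(R/J)$ to $\max(j-i)$ while the second contributes $\reg(R/J)+k_1k_2-1$; as $k_1k_2-1\geq 0$ this gives $\reg(C)=\reg(R/J)+k_1k_2-1$. Next I would reduce the tensor product: a pair $(i,j)$ has $\beta_{ij}(R/J)\neq 0$ exactly when $j=k_1r+k_2s$ with $\beta_{i'r}(A)\neq 0$ and $\beta_{i-i',s}(B)\neq 0$ for some $i'$, and then $j-i=(k_1r-i')+(k_2s-(i-i'))$. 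Since the two halves vary independently, $\reg(R/J)=M_A+M_B$ where $M_A=\max\{k_1r-i\,:\,\beta_{ir}(A)\neq 0\}$ and $M_B=\max\{k_2s-i\,:\,\beta_{is}(B)\neq 0\}$.

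It then remains to show $M_A=k_1\reg(A)+(k_1-1)(p-1)$, and symmetrically for $B$. Writing $k_1r-i=k_1(r-i)+(k_1-1)i$ and using $r-i\leq\reg(A)$, $i\leq p-1$ and $k_1-1\geq 0$ gives the upper bound $M_A\leq k_1\reg(A)+(k_1-1)(p-1)$ at once. The real content is the matching lower bound, which requires a single pair attaining both $r-i=\reg(A)$ and $i=p-1$; that is, that the regularity of $\rA$ is attained at the last step of its resolution, $\reg(A)=D_{p-1}-(p-1)$ where $D_i=\max\{r:\beta_{ir}(A)\neq 0\}$.

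The main obstacle is precisely this last-step claim, which I would deduce from the fact that the top shifts $D_i$ strictly increase along the minimal resolution of $\rA$. Since $\rA$ is Cohen–Macaulay of codimension $c=p-1$, there are no gaps ($\beta_i(A)\neq 0$ for $0\leq i\leq p-1$), and using $\mathrm{Ext}^i_{R_A}(\rA,R_A)=0$ for $i\neq c$, dualizing its minimal resolution yields a minimal resolution of $E=\mathrm{Ext}^{c}_{R_A}(\rA,R_A)$ with $\beta_{ij}(A)=\beta_{c-i,-j}(E)$. Hence $D_i=-\min\{j:\beta_{c-i,j}(E)\neq 0\}$, and since the minimal-degree shifts strictly increase along any minimal resolution (every nonzero element of a syzygy module lies in $\mathfrak{m}$ times the preceding free module, so its degree exceeds the least generator degree there), the $D_i$ strictly increase. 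Therefore $D_i-i\leq D_{p-1}-(p-1)$ for all $i$, giving $\reg(A)=D_{p-1}-(p-1)$ and $M_A=k_1D_{p-1}-(p-1)=k_1\reg(A)+(k_1-1)(p-1)$. Assembling $\reg(C)=M_A+M_B+k_1k_2-1$ then produces exactly the stated formula. Everything outside the last-step lemma is bookkeeping, so I expect that lemma, together with the care that the weighted grading forces one to argue through Betti shifts rather than local cohomology, to be the only genuine difficulty.
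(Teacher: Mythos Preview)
Your proof is correct and follows essentially the same route as the paper: both arguments use the mapping cone and tensor product structure from Theorem~\ref{main} and Proposition~\ref{gradedBettiNumbers}, and both hinge on the fact that for the Cohen--Macaulay rings $\rA$, $\rB$ (and in the paper's version also $\rC$) the regularity is attained at the last step of the minimal resolution. The only difference is that the paper simply invokes this Cohen--Macaulay fact and computes directly with the top degrees $\delta_A,\delta_B,\delta_C$, whereas you supply a proof of it via Ext duality and organize the bookkeeping through the quantities $M_A,M_B$; this is additional detail rather than a different idea.
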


\begin{proof}
Since $\rA$, $\rB$ and $\rC$ are Cohen-Macaulay, their regularity is attained at the last step of their minimal graded free resolutions, i.e.
$\reg(A)=\delta_A-(p-1)$,
$\reg(B)=\delta_B-(q-1)$,
and
$\reg(C)=\delta_C-(p+q-1)$ where $\delta_A$, $\delta_B$ and $\delta_C$ is the maximal degree of a syzygy in the last step of a minimal graded free resolution
of $\rA$, $\rB$ and $\rC$ respectively.
According to the argument in the proof of Proposition \ref{gradedBettiNumbers}, the maximal degree of a syzygy in the
last step of a minimal graded free resolution of $R/I_AR+I_BR$ is $k_1\delta_A+k_2\delta_B$, and hence
$\delta_C=k_1\delta_A+k_2\delta_B+k_1k_2$,
and the result follows.
\end{proof}

\begin{example}
Computing the degrees of all the syzygies in the resolutions of $\rA$ and $\rB$ in example \ref{ex1}, one gets that $\reg(A)=134$ and $\reg(B)=49$.
The formula in Corollary \ref{regularity} then gives that $\reg(C)=4227$ which can be checked using {\tt Singular}.
\end{example}

For a numerical semigroup $\sC$, denote by $H_C (t)$ the Hilbert series
of the ring $\rC$.
Using the well-known relation between graded Betti numbers and Hilbert series (see, e.g., \cite[Thm. 1.11]{eis}),
we recover the formula in \cite[Corollary 16]{AGO}.

\begin{corollary}{\label{Hilbert series}}
If the numerical semigroup $\sC$ is the gluing of two numerical semigroups $\sA$ and $\sB$
with $C= k_1A\sqcup k_2B$,
then
$H_C (t)= (1-t^{k_1k_2})H_A(t^{k_1})H_B(t^{k_2})$.
\end{corollary}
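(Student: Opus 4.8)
The plan is to route the whole computation through the alternating sum of graded Betti numbers, i.e.\ the numerator of the Hilbert series, since Proposition \ref{gradedBettiNumbers} and Theorem \ref{main} already describe these numbers as a tensor product modified by a mapping cone. For a graded module $M$ over a polynomial ring whose variables carry weights $w_1,\dots,w_n$, the relation recalled in \cite[Thm.~1.11]{eis} reads
\[
H_M(t)\,\prod_{\ell=1}^{n}(1-t^{w_\ell})=\sum_{i,j}(-1)^i\beta_{ij}(M)\,t^j=:K_M(t).
\]
Applying this to $\rA$ over $R_A$ (variables of weight $a_i$) and substituting $t\mapsto t^{k_1}$ gives $H_A(t^{k_1})\prod_{i=1}^{p}(1-t^{k_1a_i})=K_A(t^{k_1})$, and likewise $H_B(t^{k_2})\prod_{j=1}^{q}(1-t^{k_2b_j})=K_B(t^{k_2})$. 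Since in $R$ the variables $x_i$ and $y_j$ carry weights $k_1a_i$ and $k_2b_j$, the denominator attached to $\rC$ is exactly $\prod_i(1-t^{k_1a_i})\prod_j(1-t^{k_2b_j})$. Thus I would reduce the corollary to proving the single polynomial identity $K_C(t)=(1-t^{k_1k_2})K_A(t^{k_1})K_B(t^{k_2})$: dividing it by that denominator and invoking the three displayed relations produces precisely $H_C(t)=(1-t^{k_1k_2})H_A(t^{k_1})H_B(t^{k_2})$.

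First I would compute $K_{R/J}(t)$ for $J=I_AR+I_BR$. By Theorem \ref{main}(1) the minimal resolution of $R/J$ is $F_A\otimes F_B$, so the graded Betti numbers of $R/J$ are the convolution $\beta_{ij}(R/J)=\sum_{i'+i''=i,\ r+s=j}\beta_{i'r}(k_1A)\,\beta_{i''s}(k_2B)$, where $\beta_{i,k_1r}(k_1A)=\beta_{ir}(A)$ and $\beta_{i,k_2s}(k_2B)=\beta_{is}(B)$ as noted in the proof of Proposition \ref{gradedBettiNumbers}. Forming the alternating generating function turns this convolution into a product, so that
\[
K_{R/J}(t)=\Big(\sum_{i,r}(-1)^i\beta_{ir}(A)t^{k_1r}\Big)\Big(\sum_{i,s}(-1)^i\beta_{is}(B)t^{k_2s}\Big)=K_A(t^{k_1})\,K_B(t^{k_2}).
\]

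Next I would extract the effect of the mapping cone. By Theorem \ref{main}(2) the minimal resolution of $\rC$ is the cone of multiplication by $\rho$, a homogeneous map of degree $k_1k_2$; after the homogenizing shift its $i$-th term is $(F_A\otimes F_B)_i\oplus (F_A\otimes F_B)_{i-1}(-k_1k_2)$. Reading off the contribution of each summand to the alternating generating function, the shifted copies appear with the opposite sign and an extra factor $t^{k_1k_2}$, so that $K_C(t)=K_{R/J}(t)-t^{k_1k_2}K_{R/J}(t)=(1-t^{k_1k_2})K_{R/J}(t)$. Combining this with the factorization of $K_{R/J}$ yields the required identity for $K_C(t)$, and the corollary follows. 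The manipulations are routine generating-function bookkeeping; the only point I expect to demand care is keeping the weights straight, namely distinguishing the intrinsic degrees in $R_A,R_B$ from their rescaled counterparts $k_1a_i,k_2b_j$ in $R$, and tracking that the mapping-cone shift is by $\deg\rho=k_1k_2$ rather than by either $k_i$ alone, so that a single factor $(1-t^{k_1k_2})$ survives once the two denominators cancel.
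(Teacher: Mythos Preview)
Your proposal is correct and follows essentially the same route as the paper: both compute the numerator $K_{R/J}(t)$ (the paper calls it $N_{R/J}(t)$) as the product $K_A(t^{k_1})K_B(t^{k_2})$ via the tensor-product resolution, then use the mapping cone by $\rho$ of degree $k_1k_2$ to obtain $K_C(t)=(1-t^{k_1k_2})K_{R/J}(t)$, and finally divide by the common denominator $\prod_i(1-t^{k_1a_i})\prod_j(1-t^{k_2b_j})$. The only cosmetic difference is that you read off the Betti numbers directly from Theorem~\ref{main} rather than quoting the explicit formula of Proposition~\ref{gradedBettiNumbers}.
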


\begin{proof}
Set $J=I_A R+I_B R$.
The Hilbert series of $R/J$ is
$$
H_{R/J}(t)=
\frac{\sum_{j}(\sum_{i=0}^{p+q-2}(-1)^i\beta_{ij}(R/J)t^j)}{\prod_{i=1}^p(1-t^{k_1a_i})\prod _{i=1}^q(1-t^{k_2b_i})}\,.
$$
The numerator of this series, denoted by $N_{R/J}(t)$, can be computed using
the formula for the graded Betti numbers of $R/J$ obtained in the proof of Proposition \ref{gradedBettiNumbers}.
Using the sets $t_i(A)$ and  $t_i(B)$ defined there,
one has
\begin{eqnarray*}
N_{R/J}(t)&=&
\sum_{i=0}^{p+q-2}(-1)^i\Big(
\sum_{i'=0}^{i}
\sum_{\substack{r\in t_{i'}(A)\\ s\in t_{i-i'}(B)}}
\beta_{i'r}(A) \beta_{i-i',s}(B)t^{k_1r+k_2s}
\Big)
\\
&=&
\sum_{i=0}^{p+q-2}
\sum_{i'=0}^{i}\Big(
\sum_{\substack{r\in t_{i'}(A)\\ s\in t_{i-i'}(B)}}
(-1)^{i'}\beta_{i',r}(A) t^{k_1r}\times (-1)^{i-i'}\beta_{i-i',s}(B)t^{k_2s}
\Big)
\\
&=&
\Big(
\sum_{i=0}^{p-1}(-1)^i
\sum_{r\in t_{i}(A)}
\beta_{ir}(A) t^{k_1r}
\Big)
\Big(
\sum_{i=0}^{q-1}(-1)^i
\sum_{s\in t_{i}(B)}
\beta_{is}(B)
t^{k_2s}
\Big)
\end{eqnarray*}
and hence $H_{R/J}(t)=H_A(k_1t)\times H_B(k_2t)$.
On the other hand, the Hilbert Series of $\rC$ is
$$
H_C(t)=
\frac{
\sum_{j}(
\sum_{i=0}^{p+q-1}(-1)^i\beta_{ij}(C)t^j
)
}{
\prod_{i=1}^p(1-t^{k_1a_i})\prod _{i=1}^q(1-t^{k_2b_i})
}
$$
and, since for all $i,j\geq 0$,
$\beta_{ij}(C) = \beta_{ij}(R/J)+\beta _{i-1,j-k_1k_2}(R/J)$,
the numerator of this series, $N_{C}(t)$, is
\begin{eqnarray*}
N_C(t)&=&
\sum_{j}\Big(
\sum_{i=0}^{p+q-2}(-1)^i\beta_{ij}(R/J)t^j\Big)+
\sum_{j}\Big(
\sum_{i=1}^{p+q-1}(-1)^i\beta_{i-1,j-k_1k_2}(R/J)t^j\Big)
\\
&=&
\sum_{j}\Big(
\sum_{i=0}^{p+q-2}(-1)^i\beta_{ij}(R/J)t^j\Big)+
\sum_{j}\Big(
\sum_{i=0}^{p+q-2}(-1)^{i+1}\beta_{ij}(R/J)t^{j+k_1k_2}\Big)
\\
&=&
(1-t^{k_1k_2})N_{R/J}(t)\,,
\end{eqnarray*}
and hence $H_C(t)=(1-t^{k_1k_2})H_{R/J}(t)$, and the result follows.
\end{proof}

One can also show that
the minimal free resolution of $\rC$ constructed in Theorem \ref{main}
inherits the DG algebra structure of $\rA$ and $\rB$.

\medskip
A complex of free $R$-modules,
${\bf F}:  \ldots \to F_i \stackrel {d_i} \to F_{i-1} \ldots \to F_1 \to F_0 = R$ is said to have a {\it differential graded algebra structure}
({\it DG algebra structure} for short)
if there is a multiplication $*$ on $F =\oplus F_i$ which makes it an associative and graded commutative differential graded algebra; see \cite{BE2}.
This means that $*$ is associative and, for all $a\in F_i$ and $ b\in F_j$, $a*b \in F_{i+j}$, $b*a= (-1)^{i+j}a*b$, and $d_{i+j} (a*b) = d_i(a)*b+(-1)^ia*d_j(b)$.
This last equality is called the Leibnitz rule.
When a resolution of an $R$-algebra has a DG algebra structure, we say that it is a {\it DG algebra resolution}.
For a detailed study of these multiplicative structures on resolutions, see, e.g., \cite{Av}.

\medskip
In our case, using the construction in Theorem \ref{main}, we can write the multiplicative structure explicitly.

\begin{proposition}\label{DGstructure}
Suppose that the numerical semigroup $\sC$ is obtained by gluing $\sA$ and $\sB$.
If the semigroup rings $\rA$ and $\rB$ have minimal DG algebra resolutions, then so does the semigroup ring $\rC$.
\end{proposition}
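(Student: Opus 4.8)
The plan is to realize the minimal resolution of $\rC$ from Theorem~\ref{main} as a tensor product of DG algebras, and then invoke the standard fact that a tensor product of DG algebras is again a DG algebra. Let $K=K(\rho)$ be the Koszul complex on the single element $\rho$, that is, the exterior algebra $R\oplus Re$ with $e$ in homological degree $1$, $e\ast e=0$, and differential $d(e)=\rho$. A one-line verification of the Leibnitz rule shows that $K$ is a DG algebra, and since $\rho\in(x_1,\ldots,x_p,y_1,\ldots,y_q)$ it is a minimal free resolution of $R/\rho R$.

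First I would equip $G=F_A\otimes F_B$ with its tensor-product DG algebra structure. Regarding $F_A$ and $F_B$ as DG algebra resolutions of $R/I_AR$ and $R/I_BR$ over $R$ (the base changes to $R$ of the given minimal DG algebra resolutions of $\rA$ and $\rB$), with respective products $\ast_A$ and $\ast_B$, the formula
$$
(\alpha\otimes\beta)\ast(\alpha'\otimes\beta')=(-1)^{|\beta|\,|\alpha'|}\,(\alpha\ast_A\alpha')\otimes(\beta\ast_B\beta')
$$
makes $G$ an associative, graded commutative DG algebra, and by Theorem~\ref{main}(1) it is the minimal graded free resolution of $R/(I_AR+I_BR)$.

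The key observation is then that the mapping cone of multiplication by $\rho$ on $G$ coincides, up to an irrelevant sign on the shifted copy of $G$, with the total complex $G\otimes_R K$: in homological degree $n$ both are $G_n\oplus G_{n-1}$, and the component $1\otimes d_K$ of the tensor differential reproduces the multiplication-by-$\rho$ entry of the cone differential. By Theorem~\ref{main}(2) this total complex is the minimal graded free resolution of $\rC$. Since $G$ and $K$ are both DG algebras over $R$, the tensor product $G\otimes_R K$ inherits the tensor-product DG algebra structure and is therefore a minimal DG algebra resolution of $\rC$. Writing this resolution as $G\oplus Ge$, the multiplication reads $g\cdot g'=g\ast g'$ in $G$, $g\cdot(g'e)=(g\ast g')e$, $(ge)\cdot g'=(-1)^{|g'|}(g\ast g')e$, and $(ge)(g'e)=0$, which exhibits it explicitly in terms of $\ast_A$ and $\ast_B$.

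I expect the only genuine work to be sign bookkeeping rather than a real obstacle. One must fix conventions so that associativity, graded commutativity, and the Leibnitz rule hold simultaneously in $G\otimes_R K$, and check that the chosen identification with the mapping cone $M(\rho)$ is an isomorphism of complexes, matching the $-d_G$ and $+\rho$ entries of the cone differential. Beyond that, the argument is a direct appeal to the fact that the tensor product of graded commutative DG algebras is a graded commutative DG algebra, combined with the resolution and minimality statements already established in Theorem~\ref{main}.
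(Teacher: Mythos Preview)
Your proposal is correct and yields the same multiplication as the paper's, but the packaging is different and worth noting. The paper proceeds by direct verification in two stages: it first writes down the Koszul-sign product on $F_A\otimes F_B$ and checks graded commutativity, associativity, and the Leibnitz rule by hand; it then defines a product on the mapping cone by the formula $(a_1,b_1)\star(a_2,b_2)=(a_1a_2,\,a_1b_2+(-1)^jb_1a_2)$ and repeats the verification. Your approach collapses both steps into a single principle by recognizing $M(\rho)\cong F_A\otimes_R F_B\otimes_R K(\rho)$ and invoking once that a tensor product of graded-commutative DG algebras is a graded-commutative DG algebra. Unwinding your formulas on $G\oplus Ge$ one recovers exactly the paper's $\star$, so the two constructions agree on the nose. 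What your route buys is economy and a clear reason why the mapping cone inherits a product; what the paper's explicit computation buys is that every sign is displayed and checked, with no appeal to an external fact. The paper's remark after the proof records a still more general principle (mapping cones of DG-module maps), of which your $G\otimes K(\rho)$ identification is a clean instance.

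The only point to be careful about, as you anticipate, is the sign convention in identifying the tensor-product differential with the cone differential. With the paper's convention $\partial(a,b)=(\partial_G a+(-1)^{n-1}\rho b,\,\partial_G b)$ on $(a,b)\in G_n\oplus G_{n-1}$, the tensor differential $d(b\otimes e)=db\otimes e+(-1)^{n-1}b\otimes\rho$ matches on the nose, so no auxiliary sign twist is needed.
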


\begin{proof}
Assume that $F_A$ and $F_B$ in section \ref{notationsGluingSec}  have a DG algebra structure over $R_A$ and $R_B$ respectively.
Then $F_A\otimes F_B$, which is a resolution of $R/I_A R+I_B R$, admits the following DG algebra structure:
let $a_1\otimes b_1 \in {(F_A)}_{i}\otimes {(F_B)}_{r-i}$ and $a_2\otimes b_2 \in {(F_A)}_j\otimes {(F_B)}_{s-j}$ be two basis elements in
${(F_A\otimes F_B)}_r$ and ${(F_A\otimes F_B)}_s$ respectively,
and consider the following product:
$$(a_1\otimes b_1 ) (a_2\otimes b_2) = (-1)^{(r-i)j}(a_1a_2)\otimes (b_1b_2)\,.$$
One has that
\begin{eqnarray*}
(a_2\otimes b_2) (a_1\otimes b_1)
&=& (-1)^{(s-j)i+ij+(r-i)(s-j)}(a_1a_2)\otimes (b_1b_2) \\
&=&(-1)^{rs+(r-i)j}a_1a_2\otimes b_1b_2 = (-1)^{rs} (a_1\otimes b_1 ) (a_2\otimes b_2)\,.
\end{eqnarray*}
Further, for $w=(a_1\otimes b_1 ) (a_2\otimes b_2)$, if $\partial$ denotes the differential, one has that
\begin{eqnarray*}
\partial w&=&
(-1)^{(r-i)j}[(\partial a_1 a_2+(-1)^i a_1\partial a_2)\otimes b_1b_2+
\\
&& \hskip 5.3cm
(-1)^{i+j}a_1a_2\otimes (\partial b_1b_2+(-1)^{r-i} b_1\partial b_2)]
\\
&=&
(\partial a_1\otimes b_1)(a_2\otimes b_2)+(-1)^i (a_1\otimes \partial b_1)(a_2\otimes b_2)+
\\
&& \hskip 2.5cm (-1)^r(a_1\otimes b_1)(\partial a_2\otimes b_2)+(-1)^{r+j} (a_1\otimes b_1)(a_2\otimes \partial b_2)
\\
&=&
(\partial (a_1\otimes b_1))(a_2\otimes b_2)+(-1)^r(a_1\otimes b_1)(\partial (a_2\otimes b_2))
\end{eqnarray*}
and hence it satisfies the Leibnitz rule.
To check associativity, all we need to check is the sign.
As the equality $\deg b_1 \deg a_2+(\deg b_1+\deg b_2)\deg a_3= \deg b_1 (\deg a_2+\deg a_3)+\deg b_2 \deg a_3$ always holds,
we are done and hence $F_A\otimes F_B$ is a DG algebra resolution of $R/I_A R+I_B R$ with this product.

\medskip
Denote the complex $F_A\otimes F_B$ by $G$ for convenience.
We have a complex map $\rho:G \to G$ given by multiplication by $\rho$.  Then the mapping cone
of $\rho$ has a multiplicative structure
$\star:G_i\oplus G_{i-1}\times G_j\oplus G_{j-1} \to G_{i+j}\oplus G_{i+j-1}$ given by
$$(a_1,b_1)\star (a_2, b_2) = ( a_1 a_2, a_1b_2+(-1)^jb_1a_2)\,.$$
For the sake of completion, we verify that this provides a multiplicative structure.
\begin{eqnarray*}
(a_2,b_2)\star(a_1,b_1)& =& ( a_2a_1,a_2b_1+(-1)^i b_2a_1) = ( -1)^{ij} (a_1a_2, a_1b_2+(-1)^jb_1a_2)\\
&=& (-1)^{ij} (a_1,b_1)\star (a_2,b_2)
\end{eqnarray*}
so it is graded commutative.
It is associative because
$(a_1,b_1)\star(a_2,b_2)\star(a_3,b_3) = ( a_1a_2a_3, a_1a_2b_3+(-1)^{k}a_1b_2a_3+(-1)^{(j+k)}b_1a_2a_3 )$.

\medskip
Finally, to check the Leibnitz rule, we see that for $w=( a_1, b_1)\star(a_2,b_2)$, denoting again the differential by $\partial$,
we have
\begin{eqnarray*}
\partial w&=&
(\partial (a_1a_2)+(-1)^{i+j-1}\rho (a_1b_2+(-1)^jb_1a_2), \partial (a_1b_2+(-1)^jb_1a_2))
\\
&=&
(\partial (a_1) a_2+(-1)^ia_1 \partial (a_2)+(-1)^{i+j-1}\rho (a_1b_2+(-1)^jb_1a_2),
\\
&&\hskip 1.5cm
\partial(a_1)b_2+(-1)^i a_1\partial (b_2)+
(-1)^j(  \partial (b_1)a_2+(-1)^{i-1}b_1\partial (a_2) ))
\\
&=&
(\partial (a_1) +(-1)^{i-1}\rho b_1, \partial (b_1))\star (a_2, b_2)+
\\
&&\hskip 4.5cm
(-1)^i (a_1,b_1)\star (\partial(a_2)+(-1)^{j-1}\rho b_2, \partial (b_2))\,.
\end{eqnarray*}
This completes the proof.
\end{proof}

\begin{remark}
In fact, the following general result is true: if $C_1$ and $C_2$ are two complexes with
a DG algebra structure, if $C_2$ has a structure of $C_1$-module, and if $\phi:C_1\to C_2$ is a complex map which is also a $C_2 $-module homomorphism, then the mapping cone of $\phi$ has a structure of DG algebra.
This can be applied at the end of the proof of Theorem~\ref{DGstructure} to $C_1 = C_2 = F_A\otimes F_B$ since the map $\rho$ is clearly a module homomorphism because it is the multiplication by an element of the ring $R$. We gave a direct proof of the specific result above for completion.
\end{remark}

\section{Simple split}
\label{simplesplitSec}

Let's focus here on a special class of decomposable numerical semigroups:
if the numerical semigroup $\sC$ decomposes as $C = k_1\{1\}\sqcup k_2A$ where $k_1\in \sA\setminus A$, $k_2\neq 1$ and $\gcd(k_1,k_2) = 1$, we
call the decomposition a  {\it simple split} and denote $C= k_1\sqcup k_2A$ for simplicity.
The structure of the resolution of $\rC$ and the value of its Betti numbers come as a direct consequence of Theorem \ref{main} and
Corollary \ref{BettiNumbers},
but it can be obtained, in this particular case, by a simple and direct argument.

\begin{theorem}\label{simpleSplit}
Suppose that $C= k_1\sqcup k_2A$ is a simple split.  Then, a minimal free resolution of the semigroup ring $\rC$ is obtained as the mapping cone over
a minimal resolution of $\rA$ induced by multiplication by a single element.
In particular, $\beta_i(C) = \beta_i(A)+\beta_{i-1}(A)$, and the Cohen-Macaulay type of $\rC$ is the same as the type of $\rA$.
\end{theorem}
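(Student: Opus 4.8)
The plan is to exploit the very special shape of a simple split. Here the second semigroup is $\sB=\langle\{1\}\rangle=\N$, so $q=1$, the ambient ring on the $B$-side is $R_B=k[y_1]$, $I_B=0$, and $\rB=k[y_1]$ is a polynomial ring. Consequently $F_B$ is the trivial resolution with $(F_B)_0=R_B$ and $(F_B)_i=0$ for $i>0$, so that $\beta_0(B)=1$ and $\beta_i(B)=0$ for $i>0$, and the Cohen-Macaulay type of $\rB$ is $1$. With the notation of section~\ref{notationsGluingSec}, since $k_1=\beta_1\cdot 1$ one has $\beta_1=k_1$, and the gluing binomial is $\rho=\prod_{i=1}^p x_i^{\alpha_i}-y_1^{k_1}$ with $k_2=\sum_i\alpha_i a_i$; moreover $I_C=I_AR+\langle\rho\rangle$.

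First I would observe that, because $y_1$ is a fresh variable, $R=R_A[y_1]$ is free over $R_A$, so the base change to $R$ of the minimal resolution $F_A$ is a minimal $R$-free resolution of $R/I_AR\cong\rA[y_1]$. The core of the argument is then the single exactness input needed to run a mapping cone: namely that $\rho$ is a nonzerodivisor modulo $I_AR$, equivalently $(I_AR:\rho)=I_AR$. This is where I would concentrate, and it is in fact the only nontrivial point. I would prove it directly by noting that $R/I_AR\cong\rA\otimes_k k[y_1]$ is a polynomial ring in $y_1$ over the integral domain $\rA\subset k[t]$, hence itself an integral domain, and that under this isomorphism $\rho$ maps to $t^{k_2}-y_1^{k_1}$, which is a nonzero element (the difference of two distinct monomials in $k[t,y_1]$, since $k_1,k_2\geq 1$). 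A nonzero element of a domain is a nonzerodivisor, so $\rho$ is regular on $R/I_AR$.

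Granting this, the standard mapping cone of multiplication by $\rho$ on the base-changed resolution $F_A$ resolves $R/(I_AR+\langle\rho\rangle)=R/I_C\cong\rC$, which is exactly the assertion that the resolution of $\rC$ is the mapping cone over a minimal resolution of $\rA$ induced by multiplication by the single element $\rho$. Minimality is then immediate: $F_A$ is minimal and $\rho\in(x_1,\ldots,x_p,y_1)$ (both $y_1^{k_1}$ and $\prod_i x_i^{\alpha_i}$ are nonconstant because $k_1\geq 1$ and $k_2\geq 1$ forces some $\alpha_i\geq 1$), so every entry of every differential of the mapping cone lies in the maximal ideal. The $i$-th free module of the mapping cone is $(F_A)_i\oplus(F_A)_{i-1}$, giving $\beta_i(C)=\beta_i(A)+\beta_{i-1}(A)$ at once. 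Finally, since $\rA$ is Cohen-Macaulay of codimension $p-1$ we have $\pd\rA=p-1$, so $\beta_p(A)=0$ and the top Betti number is $\beta_p(C)=\beta_p(A)+\beta_{p-1}(A)=\beta_{p-1}(A)$; as the Cohen-Macaulay type is read off the last Betti number, the type of $\rC$ equals that of $\rA$. The only genuine obstacle is the nonzerodivisor claim, and the domain observation disposes of it cleanly; everything else is bookkeeping on the mapping cone.
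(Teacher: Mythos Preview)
Your proof is correct, and it takes a genuinely different route from the paper's own argument. The paper identifies $I_C/I_AR$ as a principal ideal via a diagram chase and then establishes exactness of the mapping cone by invoking the Buchsbaum--Eisenbud acyclicity criterion: it argues that the Fitting ideals of the maps in $F_A$ equal $I_A$ up to radical (citing \cite{S}), that the Fitting ideals of the mapping cone contain $I(\partial_i)I(\partial_{i-1})+(\rho)$, and that these have depth $n+1$ because $\rho\notin\sqrt{I_A}$. Your approach bypasses all of this machinery with a single observation: $R/I_AR\cong\rA[y_1]$ is a domain (as $\rA\subset k[t]$ is), and $\rho$ is visibly nonzero there, hence a nonzerodivisor; this is exactly what is needed for the mapping cone on $F_A$ to resolve $R/(I_AR+\langle\rho\rangle)=R/I_C$.

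What each approach buys: the paper's Fitting-ideal argument is the same template used in the proof of the main Theorem~\ref{main}, so it emphasizes that the simple-split case is structurally parallel. Your domain argument is shorter, self-contained, and avoids the references to \cite{BE1} and \cite{S}; it also makes minimality and the Betti-number consequence completely explicit, which the paper's proof leaves implicit. Either way the content is the same short exact sequence $0\to R/I_AR\xrightarrow{\rho}R/I_AR\to R/I_C\to 0$; you just verify its exactness more directly.
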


\begin{proof}
Assume that the embedding dimension of $A$ is $n$, $A=\{a_1,\ldots,a_n\}$, and denote $R=k[x_0,\ldots,x_n]$.
The following diagram commutes:
$$
\begin{array}{ccccccccc}
 &    &0       &    &0                &    &0                                      &    & \\
 &    &\uparrow&    &\uparrow         &    &\uparrow                               &    & \\
0&\rar&I_C/I_AR     &\rar&k[x_0]           &\rar&\coker{(\delta)}                       &\rar&0\\
 &    &\uparrow&    &\uparrow         &    &\uparrow                               &    & \\
0&\rar&I_C       &\rar&k[x_0,\ldots,x_n]&\rar&k[t^{k_1},t^{k_2a_1},\ldots,t^{k_2a_n}]&\rar&0\\
 &    &\uparrow&    &\uparrow         &    &\uparrow  \delta                       &    & \\
0&\rar&I_AR       &\rar&k[x_1,\ldots,x_n]&\rar&k[t^{k_2a_1},\ldots,t^{k_2a_n}]        &\rar&0\\
 &    &\uparrow&    &\uparrow         &    &\uparrow                               &    & \\
 &    &0       &    &0                &    &0                                      &    & \\
\end{array}
$$
The first row of the diagram makes $I_C/I_AR$ an ideal of $k[x_0]$ which is hence principal: it is generated by the class of the element
$\rho= x_0^{k_1}-x_1^{\alpha_1}\ldots x_n^{\alpha_n}$
coming from the fact that $k_1\in\sA$ and so $k_1=\alpha_1 a_1+\cdots+\alpha_n a_n$.
Let $F_A:\ 0\to F_n \stackrel{\partial_n} \to F_{n-1} \to \ldots F_i\stackrel{\partial_i}\to F_{i-1} \to \ldots F_1\to R$ be a minimal resolution of $R/I_A R$ and consider
the complex map $\rho: F_A\to F_A$ given by multiplication by $\rho$ .
Now, $R/I_A R$ is Cohen Macaulay so all the maps $\partial_i: F_i \to F_{i-1} $ in the resolution have Fitting ideals equal up to radical to $I_A$ and hence have the same depth $n$.  This follows from \cite[Cor. 2.5]{S}.
The $i $th Fitting ideal of the mapping cone is $I(\partial_i)I(\partial_{i-1})+(\rho)$   and  has  depth $n+1$ since $\rho\notin  \sqrt (I_A)$.   So, the mapping cone of the complex map $\rho$ is exact and it resolves $R/I_A R+(\rho) = R/I_C$.
\end{proof}

These simple splits will appear in the next section when we describe decomposable numerical semigroups with small embedding dimension.
One can also use simple splits to construct easily complete intersections of higher embedding dimension by adding generators in the semigroup one by one
as the following example shows.

\begin{example}\label{ex3}
The semigroup $\sA$ minimally generated by the simple split $A=\{11\}\sqcup 2\{3,8\}=\{11,6,16\}$ is a complete intersection
of embedding dimension 3.
One gets a complete intersection semigroup $\sC$ of embedding dimension 4 by considering the following simple split $C=17\sqcup 2A=\{17,22,12,32\}$.
One can iterate the procedure by considering, for example, the simple split $29\sqcup 2C$ to get a complete intersection of embedding dimension 5.
\end{example}

\begin{example}\label{ex4}
Complete intersections with embedding dimension 4 can be simple splits or not. The decomposable semigroup $\sC$ with $C=11\{3,5\}\sqcup 8\{4,7\}=
\{33,55,32,56\}$
is a complete intersection since the ideal $I_{C}$ is generated by the binomial $x_1^{5}-x_2^3$ that generates
the ideal $I_{\langle 3,5\rangle}$, the binomial $x_3^{7}-x_4^4$ that generates
the ideal $I_{\langle 4,7\rangle}$, and the binomial $\rho=x_1x_2-x_3x_4$ but it is not a simple split.
The resolution of $\rC\simeq R/I_{C}$, as the one of $\rC$ in example \ref{ex3}, is a Koszul complex and it has the following form:
$0\rar R\rar R^3\rar R^3\rar R\rar k[C]\rar 0$.
\end{example}

\section{Semigroups with small embedding dimension}
\label{smallembdimSec}

Let $\sC$ be a numerical semigroup minimally generated by $C$.  Then the embedding dimension of the semigroup ring $\sC$ is $n$.
When $n=2$, $\sC$ is a complete intersection, and when $n=3$, then $\sC$ is either indecomposable or a complete intersection by \cite{He}.
In fact, when it is indecomposable of embedding dimension $3$, it is generated by the $2\times 2$ minors of a $2\times 3$ matrix of the form given in
the introduction, just like the semigroup $\sB$ in our examples \ref{ex1} and \ref{ex2}.  This matrix is also the transpose of the matrix
of the first syzygies of the ideal $I_C$. Such ideals are called Hilbert-Burch since they fit in the class of ideals satisfying the theorem of the same name;
see \cite[Thm. 3.2]{eis}.

\medskip
For embedding dimension 4 we have the following:

\begin{proposition}\label{embdim4}
Numerical semigroups of embedding dimension 4 that are decomposable are
either complete intersections or almost complete intersections of type 2.
\end{proposition}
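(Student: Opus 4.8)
The plan is to run a short case analysis on the sizes $p=\#A$ and $q=\#B$ of the two pieces of the decomposition $C = k_1 A\sqcup k_2 B$. Since the embedding dimension of $\sC$ equals $\#C=p+q=4$ and both $p,q\geq 1$, the only possibilities up to interchanging $\sA$ and $\sB$ are $(p,q)=(2,2)$ and $(p,q)=(1,3)$. In each case I would identify the structure of $\rA$ and $\rB$ and then feed that information into Corollaries \ref{Gorenstein} and \ref{CMtype}, together with the generator count coming from Proposition \ref{generatorsGluing} and Corollary \ref{BettiNumbers}.

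When $(p,q)=(2,2)$, both $\sA$ and $\sB$ have embedding dimension $2$, so each of $\rA$ and $\rB$ is a hypersurface $k[x,y]/(x^a-y^b)$, hence a complete intersection. By Corollary \ref{Gorenstein}(\ref{gorItem}), $\rC$ is then a complete intersection, so this case lands in the first alternative.

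For $(p,q)=(1,3)$, assume $\#A=1$, so that $\rA=k[x_1]$ is a polynomial ring; in particular it is a complete intersection with $\beta_i(A)=0$ for $i\geq 1$ and type $1$. The piece $\sB$ has embedding dimension $3$, so by the classification recalled in the introduction (\cite{He}) it is either a complete intersection or $I_B$ is Hilbert-Burch with Betti numbers $(\beta_0,\beta_1,\beta_2)(B)=(1,3,2)$. If $\sB$ is a complete intersection, then both factors are, and Corollary \ref{Gorenstein}(\ref{gorItem}) again yields that $\rC$ is a complete intersection. If instead $\sB$ is not a complete intersection, I would extract two facts. First, by Proposition \ref{generatorsGluing} the ideal $I_C$ is minimally generated by the three generators of $I_B$ together with $\rho$ (since $I_A=0$ contributes nothing), equivalently $\beta_1(C)=\beta_0(A)[\beta_1(B)+\beta_0(B)]=1\cdot(3+1)=4$ by Corollary \ref{BettiNumbers}; as $\operatorname{ht}(I_C)=p+q-1=3$, this exhibits $\mu(I_C)=\operatorname{ht}(I_C)+1$, so $\rC$ is an almost complete intersection. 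Second, by Corollary \ref{CMtype} the Cohen-Macaulay type of $\rC$ is the product of the types of $\rA$ and $\rB$, namely $1\cdot 2=2$. Combining the two gives that $\rC$ is an almost complete intersection of type $2$, the second alternative.

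Once the two structural inputs are in place the argument is essentially bookkeeping, so I do not anticipate a genuine obstacle; the step deserving the most care is the generator count in the final subcase. I would want to record explicitly that $\beta_1(A)=0$ so that $A$ contributes no relations, and that a non-complete-intersection numerical semigroup of embedding dimension $3$ has exactly $\beta_1=3$ and type $\beta_2=2$ by its Hilbert-Burch structure; with these the identity from Corollary \ref{BettiNumbers} closes the proof and confirms the dichotomy in the statement.
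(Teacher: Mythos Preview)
Your proposal is correct and follows essentially the same approach as the paper: both split into the cases $(p,q)=(2,2)$ and $(p,q)=(1,3)$, use Corollary~\ref{Gorenstein} for the complete intersection cases, and in the remaining Hilbert--Burch subcase read off $\mu(I_C)=4$ and type $2$ from Corollary~\ref{BettiNumbers} (the paper does this by writing out the full resolution $0\to R^2\to R^5\to R^4\to R$, from which both numbers are visible, rather than citing Corollary~\ref{CMtype} separately).
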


\begin{proof}
When $\sC$ is decomposable of embedding dimension $4$, if $\sC$ is not a simple split then it decomposes
as $C=k_1A\sqcup k_2B$ where both $A$ and $B$ have 2 elements (as in example \ref{ex4}) and $\sC$ is a complete intersection as the gluing of
two complete intersections.
If $\sC$ is a simple split, $k_1\sqcup k_2A$, and $\sA$ is a complete intersection (as in example \ref{ex3}) then $\sC$ is a complete intersection.
If $\sA$ is not a complete intersection, then $I_A$ is generated by the $2\times 2$ minors of a $2\times 3$ matrix and, by Corollary \ref{BettiNumbers},
the minimal free resolution of $\rC$ is as follows: $0\rar R^2\rar R^5\rar R^4\rar R\rar\rC \rar 0$.
\end{proof}

\medskip
At this point one can ask if every numerical semigroup of embedding dimension 4 that is an almost complete intersection of type 2 is decomposable.
The answer is negative as the following example shows: the minimal free resolution of the semigroup $\sC$ minimally generated
by $C=\{9,11,13,15\}$ also has the following form, $0\rar R^2\rar R^5\rar R^4\rar R\rar\rC\rar 0$.
This semigroup is generated by an arithmetic sequence. In \cite{JA}, the minimal graded free resolution of such semigroups has been completely described.
In particular, one gets from \cite[Thm. 4.1]{JA} that for every semigroup $\sC$ of embedding dimension 4 generated by an arithmetic sequence,
if the Cohen-Macaulay type of $\rC$ is 2 then the minimal free resolution of $\rC$ has the following form:
$0\rar R^2\rar R^5\rar R^4\rar R\rar\rC\rar 0$, and hence $\sC$ is an almost complete intersection.
These semigroups are all indecomposable by the following precise result:

\begin{proposition}\label{arithSeq}
A numerical semigroup minimally generated by a set of positive integers in arithmetic progression is
decomposable if and only if it is of embedding dimension 3 and generated by a set of the form $\{2c_0, 2c_0+d, 2c_0+2d\}$
for $c_0$ and $d$ relatively prime and $d$ odd.
In particular, it is indecomposable in embedding dimension 4 and higher.
\end{proposition}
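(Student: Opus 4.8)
The plan is to reduce the whole statement to one elementary divisibility fact about arithmetic progressions and then read off both implications. Write the minimal generating set as $C=\{c_0,c_0+d,\ldots,c_0+(n-1)d\}$, where $n$ is the embedding dimension; since $\sC$ is numerical and $C$ is its minimal generating set, $\gcd(c_0,d)=1$. The key observation I would record first is: \emph{for any integer $k>1$, no two consecutive terms of the progression are both divisible by $k$}. Indeed, if $k\mid c_i$ and $k\mid c_{i+1}$, then $k$ divides both $c_{i+1}-c_i=d$ and $c_i$, hence $k\mid\gcd(c_0,d)=1$, a contradiction. Consequently, the set of indices $i$ with $k\mid c_i$ contains no two adjacent integers, so it has at most $\lceil n/2\rceil$ elements. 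The cases $n\le 2$ are degenerate (then $\sC=\N$ or a hypersurface, which we do not regard as a proper gluing), so I would assume $n\ge 3$.

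Next I would prove the ``in particular'' clause, that $\sC$ is indecomposable for $n\ge 4$. Suppose $C=k_1A\sqcup k_2B$ is a gluing. If both $A$ and $B$ have at least two elements, then both $k_1,k_2\ge 2$: were $k_1=1$, then $1=k_1\in\sB$ would force $\sB=\N$ and $B=\{1\}$. Colour each term of $C$ by the factor containing it; each colour class, being contained in a set of the form above (divisible by $k_1$, resp. $k_2$), has no two adjacent indices, and since the classes partition $\{0,\ldots,n-1\}$ they must strictly alternate. For $n\ge 4$ this places indices $0,2$ under one scalar and $1,3$ under the other, so $k_1\mid c_2-c_0=2d$ and $k_2\mid c_3-c_1=2d$; as $k_i\mid c_j$ together with $\gcd(c_0,d)=1$ gives $\gcd(k_i,d)=1$, we get $k_1\mid 2$ and $k_2\mid 2$, i.e. $k_1=k_2=2$, contradicting $\gcd(k_1,k_2)=1$. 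Otherwise one factor is a singleton, which, being a minimal generator of a numerical semigroup, must be $\{1\}$; thus the decomposition is a simple split $C=\{g\}\sqcup\ell\{b_1,\ldots,b_{n-1}\}$ with $\ell\ge 2$ and $\gcd(g,\ell)=1$. Then $n-1$ terms of $C$ are divisible by $\ell>1$, but $n-1>\lceil n/2\rceil$ for $n\ge 4$, again contradicting the observation. Hence $\sC$ is indecomposable for every $n\ge 4$.

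It remains to treat $n=3$. Since $|C|=3$, any gluing has a singleton factor, so it is a simple split $C=\{g\}\sqcup\ell\{b_1,b_2\}$ with $\ell\ge 2$, $\gcd(g,\ell)=1$, $\gcd(b_1,b_2)=1$. The two terms divisible by $\ell$ cannot be adjacent, so they are the outer terms $c_0,c_2$, while $g=c_1$ is the middle term. From $\ell\mid c_2-c_0=2d$ and $\gcd(\ell,d)=1$ I obtain $\ell=2$; hence $c_0$ is even, and $\gcd(c_0,d)=1$ forces $d$ odd. Writing $c_0=2c_0'$ yields exactly $C=\{2c_0',\,2c_0'+d,\,2c_0'+2d\}$ with $d$ odd and $\gcd(c_0',d)=1$. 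Conversely, given such a set (necessarily with $c_0'\ge 2$, since $c_0'=1$ would make $2c_0'+2d$ a multiple of $2c_0'=2$, destroying minimality and lowering the embedding dimension), put $b_1=c_0'$ and $b_2=c_0'+d$; then $\gcd(b_1,b_2)=\gcd(c_0',d)=1$, and $C=2\{b_1,b_2\}\sqcup\{b_1+b_2\}$ is a simple split with scalar $2$ and singleton $g=b_1+b_2\in\sB\setminus B$ odd, so $\gcd(g,2)=1$. By the minimal-generation criterion recalled at the start of Section~\ref{propertiesGluingSec}, this $C$ is a minimal generating set and $\sC$ is a gluing, i.e. decomposable. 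Combining this with the previous paragraph gives the stated equivalence.

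The whole argument hinges on the divisibility observation; once it is available, both directions are essentially bookkeeping. The one genuinely delicate point is the arithmetic threshold: at $n=3$ one has $n-1=\lceil n/2\rceil=2$, so a single scalar can legitimately cover two non-adjacent terms, and this is precisely what produces the exceptional family $\{2c_0,2c_0+d,2c_0+2d\}$; for $n\ge 4$ the strict inequality $n-1>\lceil n/2\rceil$ (in the simple-split case) and the forced collision $k_1=k_2=2$ (in the two-part case) rule out every gluing. I would also take care to justify the two structural reductions used above: that a decomposition possessing a singleton factor is forced to be a simple split (the unscaled singleton generates a numerical semigroup, hence equals $\{1\}$), and that the low embedding-dimension cases $n\le 2$ are not counted as gluings.
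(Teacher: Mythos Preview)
Your proof is correct and follows essentially the same route as the paper: both hinge on the observation that no two consecutive terms of the arithmetic progression can share a common factor greater than $1$, which forces the two parts of any gluing to alternate and hence, for $n\ge 4$, forces $k_1=k_2=2$, contradicting $\gcd(k_1,k_2)=1$. Your presentation is a bit more elaborate---you separate the $n\ge 4$ argument into the two-part and simple-split subcases and you verify the converse for $n=3$ explicitly---but the underlying idea is identical to the paper's.
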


\begin{proof}
If $C$ is an arithmetic sequence and $\sC$ has embedding dimension $n$, then $C = \{ c_0+id, 0\le i\le n-1\}$ where $c_0$  and $d$ are relatively prime.   
If $C$ is decomposable as $k_1A\sqcup k_2B$ , then $c_0$ and $c_0+d$ must be in different sets.  Otherwise, they would have a common factor
which would also be a common factor of $c_0$ and $d$.  Similarly, $c_0+d$ and $c_0+2d$ cannot be in the same set.  Hence $c_0, c_o+2d$ are in the same set, say $k_1A$.  So $k_1$ divides $2d$ and $c_0$, and hence $k_1 = 2$ and $c_0$ is even.
Now, if $n\geq 4$, $c_0+d, c_0+3d$ must be in $k_2B$ and hence $k_2$ divides $2d$ and $c_0+d$.  Since $c_0$ and $d$ are relatively prime, $k_2$ must also be 2.  Then all the integers in $C$ are even which is not possible.  Thus, $C$ is indecomposable if $n\geq 4$.
If $n=3$, $C$ must be of the form $\{2c_0, 2c_0+d, 2c_0+2d\}$ for $c_0$ and $d$ relatively prime and $d$ odd, which is the simple split
$C=(2c_0+d)\sqcup 2\{c_0,c_0+d\}$: this is the only possibility for $\sC$ to be decomposable if the elements in $C$ are in
arithmetic progression.
\end{proof}

Semigroups generated by arithmetic sequences provide examples of indecomposable semigroups of any type between 1 and
the embedding dimension.

\begin{corollary}\label{indecompCMtype}
For any $n\geq 4$ and any $t$, $1\leq t\leq n$, there exist indecomposable numerical semigroups of embedding dimension $n$ and type $t$.
\end{corollary}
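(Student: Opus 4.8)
The plan is to obtain the types $1\le t\le n-1$ from semigroups generated by arithmetic sequences and to treat the extremal value $t=n$ by a separate explicit construction. By Lemma~\ref{typeCMtype} the type of $\sC$ (the number of elements of the set $C'$ of pseudo-Frobenius numbers) equals the Cohen-Macaulay type of $\rC$, so it is enough to prescribe this invariant; and since $C'=\{w-e:\ w\text{ maximal in }\mathrm{Ap}(\sC,e)\}$ for $e$ the multiplicity of $\sC$, the type is exactly the number of maximal elements of the Ap\'ery set of $\sC$ with respect to $e$. The structural input that guarantees indecomposability is Proposition~\ref{arithSeq}: for $n\ge 4$, a numerical semigroup minimally generated by an arithmetic sequence of length $n$ is indecomposable, so every member of the families below is automatically indecomposable.

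For $1\le t\le n-1$, take $d=1$ and, for an integer $c_0\ge n$, set $\sC=\langle c_0,c_0+1,\dots,c_0+n-1\rangle$. As $c_0+n-1<2c_0$, the set $C$ is a minimal generating set, so the embedding dimension is exactly $n$; also $\gcd(c_0,1)=1$, and $\sC$ is indecomposable by Proposition~\ref{arithSeq}. Here the Ap\'ery set with respect to $c_0$ is $\{w_j=\lceil j/(n-1)\rceil\,c_0+j:\ 0\le j\le c_0-1\}$, and $w_j$ is maximal exactly when $\lceil j/(n-1)\rceil$ attains its largest value; counting these gives that the type of $\sC$ is the unique $\sigma\in\{1,\dots,n-1\}$ with $\sigma\equiv c_0-1\pmod{n-1}$ (equivalently $\sigma=r+1$, where $c_0-2=(n-1)q+r$ with $0\le r\le n-2$), in agreement with the resolution described in \cite{JA}. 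Hence, given $t\in\{1,\dots,n-1\}$, any $c_0\ge n$ with $c_0\equiv t+1\pmod{n-1}$ produces an indecomposable semigroup of embedding dimension $n$ and type $t$.

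The value $t=n$ is out of reach of this family, since the count above bounds the type of an arithmetic sequence of embedding dimension $n$ by $n-1$. For it I would instead exhibit, for each $n$, a semigroup of embedding dimension $n$ whose Ap\'ery set has exactly $n$ maximal elements --- concretely, $n-1$ consecutive integers together with one further generator placed so as to create precisely one extra maximal element. For $n=4$ this is realized by $\sC=\langle 10,11,12,17\rangle$: its Ap\'ery set with respect to $10$ has the four pairwise incomparable maximal elements $28,29,35,36$, so its type is $4$, and running through the gluing definition of Section~\ref{defGluingSec} shows that no partition $C=k_1A\sqcup k_2B$ satisfies $k_1\in\sB\setminus B$ and $k_2\in\sA\setminus A$, whence it is indecomposable. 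The main obstacle is to make this step uniform in $n$: one must place the extra generator so that the Ap\'ery poset gains exactly one maximal element while keeping the embedding dimension equal to $n$, and then verify type $n$ by a pseudo-Frobenius count (Lemma~\ref{typeCMtype}) together with indecomposability directly from the definition. By contrast, the range $1\le t\le n-1$ is routine once the arithmetic-sequence type formula is in hand, since varying $c_0$ modulo $n-1$ sweeps out all admissible residues.
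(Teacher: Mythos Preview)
For the range $1\le t\le n-1$ your argument coincides with the paper's: both rely on Proposition~\ref{arithSeq} to guarantee indecomposability of arithmetic sequences in embedding dimension $n\ge 4$, and on the type formula for such sequences from \cite{JA}. The paper simply cites \cite[Thm.~4.7]{JA}, while you unpack the Ap\'ery-set computation for $d=1$ and show that the type runs through $1,\dots,n-1$ as $c_0$ varies modulo $n-1$. That part is correct and is essentially the paper's proof with the reference made explicit.

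The genuine gap is the case $t=n$. You are right that the type of an arithmetic sequence of embedding dimension $n$ is at most $n-1$, so a different construction is required, but you only carry it out for $n=4$ and explicitly concede that making the ``one extra generator'' idea uniform in $n$ is the remaining obstacle. As written, then, your proposal does not prove the statement for $t=n$ when $n\ge 5$: you have neither a general family nor a verification of type $n$ and indecomposability beyond the single example $\langle 10,11,12,17\rangle$. Note that the paper's own one-line proof appeals only to arithmetic sequences via \cite[Thm.~4.7]{JA}, which, by your own (correct) count, yields types up to $n-1$; so either the bound in the corollary is intended to be $n-1$, or the paper is equally silent on $t=n$. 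In any event, to close the argument as stated you would need, for each $n\ge 4$, an explicit indecomposable numerical semigroup of embedding dimension $n$ and type $n$, together with a direct check of both properties.
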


\begin{proof}
This is a direct consequence of Proposition \ref{arithSeq} and \cite[Thm. 4.7]{JA}.
\end{proof}

We get, in particular, examples of Gorenstein semigroups that are indecomposable.

\begin{example}\label{ex5}
For all $n\geq 3$, if $C= \{n+1, 2n+1, 3n+1, \ldots, n^2+1\}$ is a set of integers that minimally generates
the numerical semigroup $\sC$, the
embedding dimension of $\sC$ is $n$. For $n=3$, it fits into the family of decomposable semigroups given in Proposition \ref{arithSeq}.
For $n\geq 4$, it is a Gorenstein numerical semigroup by \cite[Cor. 4.4]{JA} and it is indecomposable by Proposition \ref{arithSeq}.
One can obtain all the Betti numbers of $\rC$ using formula (4) in the proof of  \cite[Thm. 4.1]{JA}.
In particular, the ideal $I_C$ is minimally generated by $(n+1)(n-2)/2$ elements.
\end{example}

By Proposition \ref{embdim4}, one gets that semigroups of embedding dimension 4 that are Gorenstein and not complete intersections
are always indecomposable. For such a semigroup $\sC$, Bresinsky showed in \cite{Br} that the ideal $I_C$ is minimally generated
by 5 binomials of the form
$\{x_1^{c_1}-x_3^{d_{13}}x_4^{d_{14}},
x_3^{c_3}-x_1^{d_{31}}x_2^{d_{32}},
x_4^{c_4}-x_2^{d_{42}}x_3^{d_{43}},
x_2^{c_2}-x_1^{d_{21}}x_4^{d_{24}}, x_1^{d_{21}}x_3^{d_{43}}-x_2^{d_{32}}x_4^{d_{14}}\}$.
In \cite{braz}, we give the structure of the minimal resolution of $\rC$ in this case
and explicitly write down the maps in the resolution as follows:
$$
0 \rightarrow R \stackrel{\delta_3}{\rightarrow} R^5 \stackrel{\phi}{\rightarrow} R^{5}\stackrel{\delta_1} \rightarrow R \rightarrow\rC
\rightarrow 0
$$
where $$\phi
 = \left[
\begin{matrix}
0&0& x_2^{d_{32}} &x_3^{d_{43}}&x_4^{d_{24}}\\
0&0&x_1^{d_{21}}&x_4^{d_{14}}&x_2^{d_{42}}\\
-x_2^{d_{32}}&-x_1^{d_{21}}&0&0&x_3^{d_{13}}\\
-x_3^{d_{43}}&-x_4^{d_{14}}&0&0&x_1^{d_{31}}\\
-x_4^{d_{24}}&-x_2^{d_{42}}&-x_3^{d_{13}}&-x_1^{d_{31}}&0
\end{matrix}
\right]\quad\hbox{and}\quad
\delta _3= (\delta _1) ^T=
\left[
\begin{matrix}
x_1^{c_1}-x_3^{d_{13}}x_4^{d_{14}}\\
x_3^{c_3}-x_1^{d_{31}}x_2^{d_{32}}\\
x_4^{c_4}-x_2^{d_{42}}x_3^{d_{43}}\\
x_2^{c_2}-x_1^{d_{21}}x_4^{d_{24}}\\
x_1^{d_{21}}x_3^{d_{43}}-x_2^{d_{32}}x_4^{d_{14}}
\end{matrix}
\right]\,.
$$
This was also obtained independently in \cite{BFS}.
Moreover, one can find in \cite{braz}
a way to build families of numerical semigroups of embedding dimension 4 that are Gorenstein and not complete intersections.

\medskip
For numerical semigroups of embedding dimension 5, one has the following result.

\begin{proposition}\label{embdim5}
In embedding dimension $5$, the decomposable numerical semigroups are either complete intersections or type 2 almost complete intersections or
obtained as a simple split $k_1\sqcup k_2A$ where $A$ is an indecomposable semigroup of embedding dimension $4$.
\end{proposition}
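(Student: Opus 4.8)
The plan is to reduce the statement to a short case analysis on the cardinalities $p=\#A$ and $q=\#B$. Since $\#C=5$ and the decomposition $C=k_1A\sqcup k_2B$ is a disjoint union of two nonempty sets, the only possibilities are $\{p,q\}=\{1,4\}$ and $\{p,q\}=\{2,3\}$. Because $A$ and $B$ play interchangeable roles throughout (see the remarks following Corollaries \ref{BettiNumbers} and \ref{CMtype}), I may assume $p\le q$ and examine the two cases $p=1$ and $p=2$ separately.

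First I would treat $p=1$. Then $\sA$ has embedding dimension $1$, so $A=\{1\}$ and $\sC$ is a simple split whose embedding-dimension-$4$ factor is $\sB$. If $\sB$ is indecomposable, then $\sC$ is exactly of the third type in the statement and there is nothing more to do. If $\sB$ is decomposable, Proposition \ref{embdim4} forces it to be a complete intersection or a type $2$ almost complete intersection. In the first case $\sB$ and $\sA=\N$ are both complete intersections, so Corollary \ref{Gorenstein} makes $\sC$ a complete intersection. In the second case I would invoke Theorem \ref{simpleSplit}, which gives $\beta_i(C)=\beta_i(B)+\beta_{i-1}(B)$: feeding in the resolution $0\rar R^2\rar R^5\rar R^4\rar R$ of a type $2$ almost complete intersection of embedding dimension $4$ produces the Betti sequence $(1,5,9,7,2)$, i.e. a resolution $0\rar R^2\rar R^7\rar R^9\rar R^5\rar R\rar\rC\rar 0$; as $I_C$ has height $4$, is minimally generated by $5$ elements, and $\rC$ has Cohen-Macaulay type $2$, this exhibits $\sC$ as a type $2$ almost complete intersection.

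Next I would treat $p=2$. Now $\sA$ has embedding dimension $2$, hence is a complete intersection, while $\sB$ has embedding dimension $3$ and is therefore a complete intersection or an indecomposable Hilbert-Burch semigroup of type $2$ by \cite{He}. If $\sB$ is a complete intersection, both factors are, and $\sC$ is a complete intersection by Corollary \ref{Gorenstein}. If $\sB$ is Hilbert-Burch, I would substitute $\beta_\bullet(A)=(1,1)$ and $\beta_\bullet(B)=(1,3,2)$ into the formula of Corollary \ref{BettiNumbers}; this yields again $\beta_\bullet(C)=(1,5,9,7,2)$, so $\sC$ is a type $2$ almost complete intersection exactly as in the previous case. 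Assembling the two cases proves the proposition.

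The argument is almost entirely bookkeeping, so I do not expect a genuine obstacle; the one point demanding care is recognizing that the two structurally different constructions---a simple split over a decomposable embedding-dimension-$4$ semigroup, and a gluing of an embedding-dimension-$2$ with an embedding-dimension-$3$ semigroup---both yield the same Betti table $(1,5,9,7,2)$ and must be classified as type $2$ almost complete intersections (and, in particular, are not complete intersections, since their first Betti number exceeds the height of $I_C$ by one).
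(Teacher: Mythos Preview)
Your proposal is correct and follows essentially the same approach as the paper: a case split on $\{p,q\}=\{1,4\}$ versus $\{p,q\}=\{2,3\}$, invoking Proposition~\ref{embdim4} and the embedding-dimension-$3$ classification for the smaller factor, and reading off the number of generators and the type from Theorem~\ref{simpleSplit} or Corollary~\ref{BettiNumbers}. You are a bit more explicit than the paper in writing out the full Betti sequence $(1,5,9,7,2)$, but the logic is identical.
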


\begin{proof}
If $\sC$ is a simple split $C=k_1\sqcup k_2A$ then either $A$ is indecomposable, or $A$ is a complete intersection, or it is of type 2 and $I_A$ is minimally
generated by 4 elements by Proposition \ref{embdim4}.
In the last case, by Theorem \ref{simpleSplit}, $I_C$ is generated by 5 elements and it is of type 2 and hence it is an almost complete intersection
of type 2.

\medskip
If $\sC$ decomposes as $C=k_1A\sqcup k_2B$ where $A$ and $B$ have embedded dimension 2 and 3 respectively, if $B$ is indecomposable then
by Theorem \ref{main}, $I_C$ is minimally generated by 5 elements and the type of $\sC$ is 2 since it coincides with the type of $\sB$. Again,
in this case $\sC$ is an almost complete intersection of type 2.
If $B$ is decomposable, $\sB$ is a complete intersection and so is $\sC$.
\end{proof}

In particular, one gets the following direct consequence.

\begin{corollary}
In embedding dimension 5, any decomposable Gorenstein numerical semigroup that is not a complete intersection must be obtained as simple split.
\end{corollary}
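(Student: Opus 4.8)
The plan is to read the conclusion straight off the trichotomy established in Proposition~\ref{embdim5}, using the two hypotheses on $\sC$ to discard two of the three possibilities. Recall that Proposition~\ref{embdim5} asserts that a decomposable numerical semigroup of embedding dimension $5$ is either a complete intersection, a type $2$ almost complete intersection, or a simple split $C=k_1\sqcup k_2A$ with $A$ indecomposable of embedding dimension $4$. Since $\sC$ is assumed \emph{not} to be a complete intersection, the first possibility is ruled out immediately.

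Next I would eliminate the type $2$ almost complete intersections by invoking the Gorenstein hypothesis. A Gorenstein ring has Cohen--Macaulay type $1$, and by Lemma~\ref{typeCMtype} this Cohen--Macaulay type of $\rC$ agrees with the type of the numerical semigroup $\sC$. A type $2$ almost complete intersection has type $2\neq 1$, so it cannot be Gorenstein, and the second family of Proposition~\ref{embdim5} is therefore incompatible with the hypotheses on $\sC$ as well. The only surviving case is the simple split $C=k_1\sqcup k_2A$, which is precisely the claimed conclusion.

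If one wishes to be more explicit about the surviving case, I would add that Theorem~\ref{simpleSplit} forces the type of $\rA$ to equal the type of $\rC$, namely $1$, so that $\sA$ is Gorenstein of embedding dimension $4$; moreover, being indecomposable, $\sA$ cannot be a complete intersection (every complete intersection numerical semigroup is a gluing, as recalled in the remark following Corollary~\ref{Gorenstein}), so $\sC$ is built by a simple split from a Gorenstein, non--complete-intersection semigroup of the kind exhibited in Example~\ref{ex5}. There is essentially no obstacle in this argument, since all the work has already been done in Proposition~\ref{embdim5}; the only point requiring attention is the identification of the Gorenstein property with type $1$ through Lemma~\ref{typeCMtype}, which is exactly what allows the type $2$ family to be discarded.
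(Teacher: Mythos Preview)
Your argument is correct and is exactly the direct deduction the paper has in mind: use the trichotomy of Proposition~\ref{embdim5}, discard complete intersections by hypothesis and type~$2$ almost complete intersections by the Gorenstein assumption, leaving only the simple split. The appeal to Lemma~\ref{typeCMtype} is harmless but not strictly needed, since ``type~$2$'' in Proposition~\ref{embdim5} already refers to the Cohen--Macaulay type of the ring, which is $1$ precisely when $\rC$ is Gorenstein.
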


\section{Semigroups obtained by gluing in higher dimension}
\label{higherdimSec}

One can now consider affine semigroups $\sC$ finitely generated by a subset $C\subset\N^n$. As for numerical semigroups,
given an arbitrary field $k$, if $C=\{\alpha_1,\ldots,\alpha_s\}$, one can consider the ring homomorfism
$\phi: k[x_1, \dots, x_s] \to k[t_1,\ldots, t_n]$ given by $\phi(x_i) = t^{\alpha_i}$ where for $\alpha=(a_1,\ldots,a_n)\in\N^n$,
$t^\alpha = \prod _j t_j^{a_j}$.
The kernel of $\phi$ is a binomial prime ideal, $I_C$, and the semigroup ring $\rC$ is isomorphic to $k[x_1, \dots, x_s]/I_C$.
It is graded if one gives to each variable $x_i$ weight $|\alpha_i|$.

\medskip
In this context, Rosales defined  in \cite{rosales} the concept of gluing. For an affine semigroup $\sC$,
when the set of generators of the semigroup splits into two disjoint parts, $C=A\sqcup B$, such that $I_C$ is minimally generated
by $I_A\cup I_B \cup\{\rho\}$ where $\rho$ is a binomial whose first, respectively second, monomial involves only variables corresponding to elements
in $A$, respectively $B$, we say that $\sC$ is obtained by {\it gluing} $\sA$ and $\sB$.
A characterization of this property in terms of the semigroups $\sA$ and $\sB$ and the groups associated to them is given
in \cite[Thm. 1.4]{rosales}; see also \cite{AGO}.

\medskip
In this situation, the argument in the proof of Theorem \ref{main} is valid and hence Theorem~\ref{main} and Corollaries
\ref{BettiNumbers} and \ref{CMtype} hold. We will use again notations in section \ref{notationsGluingSec}
except the ones involving $k_1$ and $k_2$ that are not defined in this general context.

\begin{theorem}\label{mainHD}
Consider an affine semigroup $\sC$ obtained by gluing $\sA$ and $\sB$.
\begin{enumerate}
\item\label{mainHDidealJ}
$F_A\otimes F_B$ is a minimal graded free resolution of $R/I_A R+I_B R$.
\item\label{mainHDsgC}
A minimal graded free resolution of the semigroup ring $\rC$ can be obtained as the mapping cone
of $\rho: F_A\otimes F_B\to F_A\otimes F_B$ where $\rho$ is induced by multiplication by $\rho$.
\item\label{BettiNumbersHD}
The Betti numbers of $\rC$ are
$$
\forall i\geq 0,\
\beta_i(C) =
\sum _{i'=0}^i \beta_{i'}(A)[\beta_{i-i'}(B)+\beta_{i-i'-1}(B)]=
\sum _{i'=0}^i \beta_{i'}(B)[\beta_{i-i'}(A)+\beta_{i-i'-1}(A)]\,.
$$
\item\label{CMtypeHD}
In particular,
the Cohen-Macaulay type of $\rC$ is the product of the Cohen-Macaulay types of $\rA$ and $\rB$.
\end{enumerate}
\end{theorem}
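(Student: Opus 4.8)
The plan is to check that the proof of Theorem~\ref{main} carries over essentially verbatim, isolating the two places where the numerical hypothesis was implicitly used. Part~(\ref{mainHDidealJ}) needs no change: $I_A$ and $I_B$ are generated by binomials in the disjoint variable sets $x_1,\ldots,x_p$ and $y_1,\ldots,y_q$, so $F_A\otimes F_B$ is again a minimal free resolution of $R/(I_AR+I_BR)\cong\rA\otimes_k\rB$, and the minimality criterion recalled in section~\ref{notationsGluingSec} is preserved under tensoring.

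For part~(\ref{mainHDsgC}) the real content is that $\rho$ is a nonzerodivisor on $R/(I_AR+I_BR)$: the mapping cone of the chain map $\rho\colon F_A\otimes F_B\to F_A\otimes F_B$ resolves $R/(I_AR+I_BR+(\rho))$ exactly when multiplication by $\rho$ is injective on $R/(I_AR+I_BR)$, and minimality then follows as before because both monomials of $\rho$ are nonconstant, so $\rho\in(x_1,\ldots,x_p,y_1,\ldots,y_q)$. By the gluing hypothesis of \cite{rosales} we have $I_C=I_AR+I_BR+(\rho)$ with $\rho\notin I_AR+I_BR$, so I would reduce the nonzerodivisor claim to the primeness of $I_AR+I_BR$. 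I would obtain the latter from the standard fact that a toric (saturated lattice) ideal stays prime under any extension of the base field, so that $\rA$ and $\rB$ are geometrically integral and $\rA\otimes_k\rB$ is a domain; a nonzero element of a domain is automatically a nonzerodivisor. Equivalently, one can rerun the depth argument of Theorem~\ref{main} via the exactness criterion \cite{BE1}, checking that the Fitting ideals of the mapping cone all contain the prime $I_C$ up to radical.

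Parts~(\ref{BettiNumbersHD}) and~(\ref{CMtypeHD}) are then identical bookkeeping to Corollaries~\ref{BettiNumbers} and~\ref{CMtype}. From the tensor product one reads $\beta_i(R/(I_AR+I_BR))=\sum_{i'}\beta_{i'}(A)\beta_{i-i'}(B)$, and from the minimal mapping cone $\beta_i(C)=\beta_i(R/(I_AR+I_BR))+\beta_{i-1}(R/(I_AR+I_BR))$; rearranging gives the two symmetric formulas in~(\ref{BettiNumbersHD}), and reading off the top step gives the product of the last Betti numbers in~(\ref{CMtypeHD}).

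The single genuinely new point---and the main obstacle---is that in higher dimension Cohen--Macaulayness is no longer automatic, so I must make sure the height behaves correctly: passing from $I_AR+I_BR$ to $I_C$ must raise the height by exactly one, equivalently $\operatorname{rank}\Z C=\operatorname{rank}\Z A+\operatorname{rank}\Z B-1$, i.e. $\operatorname{rank}(\Z A\cap\Z B)=1$, which is precisely what Rosales' definition of gluing guarantees. This is what forces $\rho$ to raise the codimension by one, so that the mapping cone is acyclic. Moreover, comparing resolution lengths through Auslander--Buchsbaum shows that $R/(I_AR+I_BR)$ is Cohen--Macaulay if and only if $\rA$ and $\rB$ are; since $\rho$ is then a nonzerodivisor on this Cohen--Macaulay ring, $\rC=(\rA\otimes_k\rB)/(\rho)$ is Cohen--Macaulay as well, and its last Betti number is its Cohen--Macaulay type, giving the multiplicativity in~(\ref{CMtypeHD}).
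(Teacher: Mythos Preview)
Your proposal is correct and in fact somewhat more careful than the paper's own argument, which simply asserts that the proof of Theorem~\ref{main} and Corollary~\ref{BettiNumbers} carries over verbatim (replacing $p-1$, $q-1$ by $\pd(\rA)$, $\pd(\rB)$) because it only uses the shape of the generating set of $I_C$. The one genuine difference is your treatment of part~(\ref{mainHDsgC}): the paper inherits the Buchsbaum--Eisenbud exactness argument from Theorem~\ref{main}, whereas your primary route is to observe that $R/(I_AR+I_BR)\cong \rA\otimes_k\rB$ is a domain (being a semigroup ring, equivalently a subring of a polynomial ring over $k$), so that $\rho\notin I_AR+I_BR$ is automatically a nonzerodivisor and the mapping cone is acyclic by the standard long exact sequence. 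This is cleaner and avoids any appeal to Fitting-ideal depths; in particular it makes transparent that no Cohen--Macaulay hypothesis is needed for (\ref{mainHDidealJ})--(\ref{BettiNumbersHD}). Your final paragraph, however, mixes the two approaches slightly: the rank condition $\operatorname{rank}(\Z A\cap\Z B)=1$ from Rosales' definition is what one would invoke to check depths in the Buchsbaum--Eisenbud argument, but it plays no role in your nonzerodivisor argument, so the sentence ``this is what forces~$\rho$ to raise the codimension by one, so that the mapping cone is acyclic'' is superfluous once you have established that $R/(I_AR+I_BR)$ is a domain. Your added remark that (\ref{CMtypeHD}) presupposes Cohen--Macaulayness of $\rA$ and $\rB$ (so that the last Betti number really is the Cohen--Macaulay type) is a legitimate caveat that the paper leaves implicit.
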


\begin{proof}
(\ref{mainHDidealJ}) and (\ref{mainHDsgC}) are obtained exactly as Theorem \ref{main} since
our proof depends only on the form of the generating set of $I_C$.
On the other hand, the proof of Corollary \ref{BettiNumbers} also works in this general context by substituting $p-1$ and $q-1$ by $\pd(\rA)$ and $\pd(\rB)$,
the projective dimensions of $\rA$ and $\rB$, and it gives (\ref{BettiNumbersHD}).
(\ref{CMtypeHD}) is a direct consequence of (\ref{BettiNumbersHD}).
\end{proof}

A main difference between that case of numerical semigroups and the general case is that semigroup rings associated to affine semigroups are not
always Cohen-Macaulay. The relation between the lengths of the minimal graded free resolutions of $\rA$, $\rB$ and $\rC$ is given
by the following direct consequence of Theorem \ref{mainHD}(\ref{mainHDsgC}):

\begin{corollary}\label{pdHD}
If the affine semigroup $\sC$ is obtained by gluing $\sA$ and $\sB$, then
$\pd(\rC)=\pd(\rA)+\pd(\rB)+1$.
\end{corollary}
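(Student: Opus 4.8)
The plan is to read off the projective dimension directly from the mapping cone construction in Theorem~\ref{mainHD}(\ref{mainHDsgC}). Recall that the minimal graded free resolution of $\rC$ is the mapping cone of the multiplication map $\rho: G\to G$ where $G=F_A\otimes F_B$. The crucial point, already established in the proof of Theorem~\ref{mainHD}, is that this mapping cone is \emph{minimal}, so no cancellation occurs and the length of the resolution is exactly the length of the mapping cone complex.

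First I would compute $\pd(R/I_AR+I_BR)$, the projective dimension of the module resolved by $G$. Since $I_AR$ and $I_BR$ are generated by binomials in disjoint sets of variables, $G=F_A\otimes F_B$ is a minimal free resolution of $R/(I_AR+I_BR)$ by part (\ref{mainHDidealJ}), and the length of a tensor product of complexes is the sum of the lengths. Hence $\pd(R/I_AR+I_BR)=\pd(\rA)+\pd(\rB)$, where I use that $F_A$ and $F_B$, viewed as resolutions over $R$ with the same maps, have the same lengths $\pd(\rA)$ and $\pd(\rB)$ as they do over $R_A$ and $R_B$ respectively. Writing $\ell=\pd(\rA)+\pd(\rB)$, the complex $G$ is concentrated in homological degrees $0$ through $\ell$.

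Next I would analyze the mapping cone. For a complex map $\rho: G\to G$, the mapping cone $M(\rho)$ in homological degree $k$ is $G_k\oplus G_{k-1}$. Since $G$ lives in degrees $0,\ldots,\ell$, the mapping cone is nonzero precisely in degrees $0,\ldots,\ell+1$: the top degree $\ell+1$ contributes $G_\ell\oplus G_{\ell+1}=G_\ell$, which is nonzero because $F_A$ and $F_B$ are minimal and so $G_\ell\neq 0$. Because the mapping cone is a minimal resolution of $\rC$, its length equals $\pd(\rC)$, giving $\pd(\rC)=\ell+1=\pd(\rA)+\pd(\rB)+1$.

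There is no serious obstacle here; the only point requiring care is the minimality of the mapping cone, which guarantees that its length coincides with the projective dimension rather than merely bounding it from above. That minimality is exactly what the proof of Theorem~\ref{mainHD} supplies, since the differentials of $G$ have entries in the maximal ideal and $\rho\in(x_1,\ldots,x_p,y_1,\ldots,y_q)$ as well, so every entry of the mapping cone differential lies in the irrelevant ideal. Thus the statement follows immediately once minimality and the additivity of lengths under tensor product are invoked.
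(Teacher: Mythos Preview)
Your proposal is correct and is precisely the argument the paper has in mind: the paper states only that the corollary is ``a direct consequence of Theorem~\ref{mainHD}(\ref{mainHDsgC})'' without writing out any details, and what you have done is unpack that consequence by noting that the tensor product $F_A\otimes F_B$ has length $\pd(\rA)+\pd(\rB)$, that the mapping cone adds one, and that minimality ensures this length equals $\pd(\rC)$.
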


\begin{example}
Consider the subsets of $\N^3$, $A=\{(4,0,0),(3,1,0),(2,2,0),(1,3,0)\}$, $B=\{(3,3,0),(3,2,1),(3,1,2),(3,0,3)\}$
and $C=A\sqcup B$, and the affine semigroups $\sA$, $\sB$ and $\sC$.
Set $R_A=k[x_1,\ldots,x_4]$, $R_B=k[y_1,\ldots,y_4]$ and $R=k[x_1,\ldots,y_4]$.
Using {\tt Singular} \cite{singular}, one gets that the ideals $I_A\subset R_A$ and $I_B\subset R_B$ are minimally generated by
$\{x_3^2-x_2x_4,x_2^2-x_1x_3,x_2x_3-x_1x_4\}$ and $\{y_3^2-y_2y_4,y_2^2-y_1y_3,y_2y_3-y_1y_4\}$,
and the ideal $I_C\subset R$ is minimally generated by the union of those two sets and the element $\rho=y_1^2-x_1x_4^4$.
The minimal free resolutions of the semigroup rings $\rA$ and $\rB$, as $R$-modules, are of the following form,
$0\rar R^2\rar R^3\rar R$ and, by Theorem \ref{mainHD}, that of $\rC$ is $0\rar R^4\rar R^{16}\rar R^{25}\rar R^{19}\rar R^7\rar R$.
\end{example}

\section*{Acknowledgements}
The second author acknowleges with pleasure the support and hospitality of University of Valladolid while working on this project.

\end{document}